\newcommand{\hyp}[5]{\,\mbox{}_{#1}F_{#2}\!\left(
  \genfrac{}{}{0pt}{}{#3}{#4};#5\right)}
\newcommand{\qhyp}[5]{\,\mbox{}_{#1}\phi_{#2}\!\left(
\genfrac{}{}{0pt}{}{#3}{#4};#5\right)}
\newcommand{\myref}[1]{(\ref{#1})}
\def\cprime{$'$}
\newtheorem{thm}[lemma]{Theorem}
\newtheorem{cor}[lemma]{Corollary}
\def\eqnarray{\stepcounter{equation}\let\@currentlabel=\theequation
\global\@eqnswtrue
\tabskip\@centering\let\\=\@eqncr
$$\halign to \displaywidth\bgroup\hfil\global\@eqcnt\z@
  $\displaystyle\tabskip\z@{##}$&\global\@eqcnt\@ne
  \hfil$\displaystyle{{}##{}}$\hfil
  &\global\@eqcnt\tw@ $\displaystyle{##}$\hfil
  \tabskip\@centering&\llap{##}\tabskip\z@\cr}
\def\endeqnarray{\@@eqncr\egroup
      \global\advance\c@equation\m@ne$$\global\@ignoretrue}
\def\@yeqncr{\@ifnextchar [{\@xeqncr}{\@xeqncr[5pt]}}
\newcommand{\bfx}{{\bf x}}
\newcommand{\bfxp}{{{\bf x}^\prime}}
\newcommand{\wbfx}{\widehat{\bf x}}
\newcommand{\wbfxp}{{\widehat{\bf x}^\prime}}
\newcommand{\Si}{{\mathbf S}}
\newcommand{\C}{{\mathbb C}}
\newcommand{\N}{{\mathbb N}}
\begin{document}

\renewcommand{\PaperNumber}{***}

\FirstPageHeading

\ShortArticleName{On a generalization of the Rogers generating function}

\ArticleName{On a generalization of the Rogers generating function}

\Author{Howard S. Cohl\,$^\dag\!\!\ $, Roberto 
S. Costas-Santos\,$^\S\!\!\ $, and Tanay Wakhare\,$^\ddag{}$}

\AuthorNameForHeading{H. S. Cohl, R. S. Costas-Santos, T. Wakhare}
\Address{$^\dag$ Applied and Computational 
Mathematics Division, National Institute of Standards 
and Technology, Mission Viejo, CA 92694, USA
\URLaddressD{
\href{http://www.nist.gov/itl/math/msg/howard-s-cohl.cfm}
{http://www.nist.gov/itl/math/msg/howard-s-cohl.cfm}
}
} 
\EmailD{howard.cohl@nist.gov} 

\Address{$^\S$ Departamento de F\'isica y Matem\'{a}ticas,
Universidad de Alcal\'{a},
c.p. 28871, Alcal\'{a} de Henares, Spain
} 
\URLaddressD{
\href{http://www.rscosan.com}
{http://www.rscosan.com}
}
\EmailD{rscosa@gmail.com} 
\Address{$^\ddag$ Department of Mathematics, University 
of Maryland, College Park, MD 20742, USA}
\EmailD{twakhare@gmail.com} 

\ArticleDates{Received 18 April 2018 in final form ????; 
Published online ????}

\Abstract{We derive a generalized Rogers generating 
function and corresponding definite integral, for 
the continuous $q$-ultraspherical polynomials by applying 
its connection relation and utilizing orthogonality. 
Using a recent generalization of the Rogers generating function
by Ismail \& Simeonov expanded in terms of Askey-Wilson
polynomials, we derive corresponding generalized expansions 
for the continuous $q$-Jacobi, and Wilson polynomials with two
and four free parameters respectively.  
Comparing the coefficients of the Askey-Wilson expansion to our 
continuous $q$-ultraspherical/Rogers expansion, we derive a 
new quadratic transformation for basic hypergeometric series connecting 
${}_2\phi_1$ and ${}_8\phi_7$.
}
\Keywords{
Basic hypergeometric series; Basic hypergeometric 
orthogonal polynomials; Generating functions; Connection 
coefficients; Eigenfunction expansions; Definite integrals.}

\Classification{33C45, 05A15, 33C20, 34L10, 30E20}

\section{Introduction}

In the context of generalized hypergeometric orthgononal 
polynomials, H. Cohl developed in \cite[(2.1)]{CohlGenGegen} 
a series rearrangement technique which produces a 
generalization of the generating function for the 
Gegenbauer polynomials. We have since demonstrated 
that this technique is valid for a larger class of
hypergeometric orthogonal polynomials. For instance,
in \cite{Cohl12pow}, we applied this same technique 
to the Jacobi polynomials and in \cite{CohlMacKVolk}, 
we extended this technique to many generating functions 
for the Jacobi, Gegenbauer, Laguerre, and Wilson polynomials.

The series rearrangement technique combines a connection 
relation with a generating function, resulting in a series 
with multiple sums. The order of summations are then 
rearranged and the result often simplifies to produce a
generalized generating function whose coefficients are 
given in terms of
generalized or basic hypergeometric functions. This 
technique is especially productive when using connection 
relations with one free parameter, since the relation is
most often a product of Pochhammer and $q$-Pochhammer 
symbols.

Basic hypergeometric orthogonal polynomials with more 
than one free parameter, such the Askey-Wilson polynomials, 
have multi-parameter connection relations. 
These connection relations are in general given by single 
or multiple summation expressions. For the Askey-Wilson
polynomials, the connection relation with four free 
parameters is given as a basic double hypergeometric series.
The fact that the four free parameter connection coefficient 
for the Askey-Wilson polynomials is given by a double 
sum was known to Askey and Wilson as far back as 1985 (see 
\cite[p. 444]{Ismail}). When our series rearrangement 
technique is applied to cases with more than one free 
parameter, the resulting coefficients of the generalized
generating function are rarely given in terms of a basic 
hypergeometric series. The more general problem of 
generalized generating functions with more than one free 
parameter requires the theory of multiple basic 
hypergeometric series and is not treated in this paper.

The coefficients of our derived generalized 
generating functions are basic hypergometric 
functions.  There are many known summation 
formulae for basic hypergeometric functions 
(see for instance, \cite[Sections 17.5--17.7]{NIST:DLMF}). 
One could study the special combinations 
of parameters which allow for summability of 
our basic hypergeometric coefficients. 
However, in these cases the affect of summability 
for special parameter values, simply reduces to 
a re-expression of the original generating function 
used to generate the generalizations that we 
present. So therefore nothing new would then 
be learned by this study.

In this paper, we apply this technique to 
generalize generating functions for basic 
hypergeometric orthogonal polynomials in the 
$q$-analog of the Askey scheme 
\cite[Chapter 14]{Koekoeketal}.
These are the Askey-Wilson polynomials 
(Section \ref{AskeyWilsonpolynomials}),
the continuous $q$-Jacobi polynomials 
(Section \ref{continuousqJacobipolynomials}),
and the continuous  $q$-ultraspherical/Rogers polynomials
(Section \ref{Rogerscontinuousqultrasphericalpolynomials}).
In Section 
\ref{Definiteintegralsinfiniteseriesandqintegrals},
we have also computed new definite integrals 
corresponding to our generalized generating function 
expansions using orthogonality for the studied basic 
hypergeometric orthogonal polynomials.


In addition of being of independent interest, 
this investigation was motivated by an 
application of generalized generating functions 
in the non-$q$ regime
\cite{Cohl12pow, CohlGenGegen}. This would be 
the generation of $q$-polyspherical addition 
theorems in terms of a product of $q$-zonal 
harmonics.  In order to compute these 
$q$-analogues, one would need to derive a 
$q$-analogue of the addition theorem for
the hyperspherical harmonics (see \cite{WenAvery}; 
see also \cite[Section 10.2.1]{FanoRau})
\[
C_n^{d/2-1}(\cos\gamma)
=\frac{2(d-2)\pi^{d/2}}
{(2n+d-2)\Gamma(d/2)}
\sum_{K} Y_n^K (\wbfx) \overline{Y_n^K (\wbfxp)},
\]
\noindent 
where, for a given value of ${n}\in\mathbb N_0:=\{0,1,2,\ldots\}$,
$C_n^\mu$ is the Gegenbauer polynomial, $K$ stands for a 
set of $(d-2)$-quantum numbers identifying normalized 
hyperspherical harmonics $Y_{{n}}^{K}:\Si^{d-1}\to\mathbb C$, 
and $\gamma$ is separation angle between two arbitrary 
vectors  $\bfx,\bfxp\in\mathbb R^d$.
One would also need $q$-analogues of a fundamental solution of
the polyharmonic equation, and Laplace's expansion
\[
\frac{1}{\|\bfx-\bfxp\|^{d-2}}=\sum_{l=0}^\infty 
\frac{r_<^l}{r_>^{l+d-2}}
C_l^{d/2-1}(\cos\gamma),
\]
which is the $q\uparrow 1^{-}$ limit of the generating 
function for the continuous $q$-ultraspherical/Rogers 
polynomials (see (\ref{4:22}) below).
These analogues do not exist in the literature, 
however they may be found by using material from 
\cite[Section 3]{Noumietal96},  \cite{IorgovKlimyk2001}, 
which we will attempt in future publications.
Addition theorems for continuous $q$-ultraspherical/Rogers polynomials
should also be useful here \cite{Koorwinder2018}.


\section{Preliminaries}
Throughout the paper, we adopt the following notation to indicate 
sequential positive and negative elements, in a list of elements, namely
\[
\pm a:=\{a,-a\}.
\]
If the symbol $\pm$ appears in an expression, but not in 
a list, it is to be treated as normal. In order to obtain 
our derived identities, we rely on properties of the
$q$-Pochhammer symbol, also called $q$-shifted factorial, 
which is defined for $n\in\mathbb N_0$ as follows:
\begin{equation}
\label{2:1}
(a;q)_0:=1,
\quad
(a;q)_n:=(1-a)(1-aq)\cdots(1-aq^{n-1}),
\end{equation}
and
\begin{equation}
(a;q)_\infty:=\prod_{n=0}^\infty (1-aq^{n}),
\label{2:2}
\end{equation}
where $q, a\in \mathbb C$, $0<|q|<1$.
We define the $q$-factorial as \cite[(1.2.44)]{GaspRah}
\[
[0]_q!:=1, \ [n]_q!:=[1]_q[2]_q\cdots 
[n]_q, \ n\ge 1,
\]
where the $q$-number is defined as 
\cite[(1.8.1)]{Koekoeketal}
\[
[z]_q:=\frac{1-q^z}{1-q},\qquad z
\in\mathbb C.
\]
Note that $[n]_q! = (q;q)_n / (1-q)^n$.

The following properties for the $q$-Pochhammer 
symbol can be found in Koekoek et al. (2010) 
\cite [(1.8.7), (1.8.10-11), (1.8.14), 
(1.8.19), (1.8.21-22)]{Koekoeketal},
namely for appropriate values of $a$ and 
$k\in\mathbb N_0$,
\begin{eqnarray}
\label{2:3} &&\hspace{-6.5cm}
(a;q)_{n+k}=(a;q)_k (aq^k;q)_n = (a;q)_n
(aq^n;q)_k,\\[0.2cm]
&&\hspace{-6.5cm}(a^2;q^2)_n=(\pm a;q)_n. 
\label{2:4} 
\end{eqnarray}
Observe that by using (\ref{2:1}) and 
(\ref{2:4}), we get
\begin{eqnarray}
&&\hspace{-6.5cm}(aq^n;q)_n=\frac{(\pm 
\sqrt{a},\pm \sqrt{aq};q)_n}{(a;q)_n}. 
\label{2:5}
\end{eqnarray}

\begin{lemma}
Let $n\in\mathbb N_0$, $q, a,\beta\in\mathbb C$, $0<|q|<1$. 
Then
\begin{equation}
\label{2:6}
(a;q)_{n+\beta}=(a;q)_n(aq^n;q)_\beta.
\end{equation}
\end{lemma}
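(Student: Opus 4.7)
The plan is to read the identity as an extension of (\ref{2:3}) from integer second index to complex $\beta$, using the standard infinite-product definition of $(a;q)_\beta$ for non-integer $\beta$. Specifically, for $\beta\in\mathbb C$ with $0<|q|<1$, one defines
\[
(a;q)_\beta:=\frac{(a;q)_\infty}{(aq^\beta;q)_\infty},
\]
which is consistent with (\ref{2:1}) when $\beta\in\mathbb N_0$ (since then $(aq^\beta;q)_\infty=\prod_{k\ge\beta}(1-aq^k)$ is the tail of the product in (\ref{2:2})). This is the only nontrivial choice to fix, and with this in hand the identity becomes an immediate consequence of a product splitting.

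First, I would record the auxiliary identity
\[
(a;q)_\infty=(a;q)_n\,(aq^n;q)_\infty,
\]
which follows by splitting the infinite product (\ref{2:2}) at index $n$ and recognizing the initial $n$ factors as $(a;q)_n$ by (\ref{2:1}); this may be viewed as the $k\to\infty$ case of (\ref{2:3}). Next I would apply the definition above to $(a;q)_{n+\beta}$, use the auxiliary identity in the numerator, and notice that the denominator rewrites as $(aq^n\cdot q^\beta;q)_\infty=((aq^n)q^\beta;q)_\infty$. Concretely,
\[
(a;q)_{n+\beta}=\frac{(a;q)_\infty}{(aq^{n+\beta};q)_\infty}
=\frac{(a;q)_n\,(aq^n;q)_\infty}{((aq^n)q^\beta;q)_\infty}
=(a;q)_n\cdot\frac{(aq^n;q)_\infty}{((aq^n)q^\beta;q)_\infty}
=(a;q)_n\,(aq^n;q)_\beta,
\]
where the last equality reapplies the definition with base point $aq^n$ and exponent $\beta$.

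There is essentially no obstacle beyond being explicit about the extension of $(a;q)_\beta$ to complex index, which is needed because the lemma asserts the identity for $\beta\in\mathbb C$ rather than only $\beta\in\mathbb N_0$. The one technical point is convergence of the infinite products, which is guaranteed by the hypothesis $0<|q|<1$ so that both $(a;q)_\infty$ and $(aq^\beta;q)_\infty$ are absolutely convergent for every complex $a$ and $\beta$, permitting the rearrangement used in splitting the product at index $n$.
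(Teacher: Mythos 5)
Your proof is correct and follows essentially the same route as the paper: both invoke the definition $(a;q)_\beta=(a;q)_\infty/(aq^\beta;q)_\infty$ together with $(a;q)_n=(a;q)_\infty/(aq^n;q)_\infty$ and combine them. You simply spell out the chain of equalities that the paper leaves implicit.
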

\begin{proof}
Follows from the identity \cite[(1.8.9)]{Koekoeketal}
\begin{equation}
(a;q)_\beta:=
\dfrac{(a;q)_\infty}{(aq^\beta;q)_\infty},
\label{2:7}
\end{equation}
where $a\in\mathbb C$, and \cite[(1.8.8)]{Koekoeketal} 
$(a;q)_n=(a;q)_\infty/(aq^n;q)_\infty$.
\end{proof}

\begin{lemma}
Let $q, \alpha,\beta\in\mathbb C$, $0<|q|<1$. Then
\begin{equation}
\label{2:8}
\lim_{q\uparrow1^{-}}\frac{(q^\alpha;q)_\beta}
{(1-q)^\beta}=(\alpha)_\beta.
\end{equation}
\end{lemma}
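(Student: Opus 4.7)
The plan is to reduce the statement to the well-known limit $\Gamma_q(x)\to\Gamma(x)$ as $q\uparrow 1^-$ for the $q$-gamma function, via the identity (\ref{2:7}) from the preceding lemma.

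First I would apply (\ref{2:7}) with $a=q^\alpha$ to write
\[
(q^\alpha;q)_\beta=\frac{(q^\alpha;q)_\infty}{(q^{\alpha+\beta};q)_\infty}.
\]
Then I would invoke the definition of the $q$-gamma function,
\[
\Gamma_q(x)=\frac{(q;q)_\infty}{(q^x;q)_\infty}(1-q)^{1-x},
\]
to substitute $(q^x;q)_\infty = (q;q)_\infty(1-q)^{1-x}/\Gamma_q(x)$ for $x=\alpha$ and $x=\alpha+\beta$. The factors of $(q;q)_\infty$ cancel in the numerator and denominator, leaving
\[
(q^\alpha;q)_\beta=(1-q)^{\beta}\,\frac{\Gamma_q(\alpha+\beta)}{\Gamma_q(\alpha)}.
\]

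Dividing through by $(1-q)^\beta$ and letting $q\uparrow 1^-$, the standard convergence $\Gamma_q(x)\to\Gamma(x)$ gives
\[
\lim_{q\uparrow 1^-}\frac{(q^\alpha;q)_\beta}{(1-q)^\beta}=\frac{\Gamma(\alpha+\beta)}{\Gamma(\alpha)}=(\alpha)_\beta,
\]
as required.

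The only substantive obstacle is justifying the limit $\Gamma_q(x)\to\Gamma(x)$ itself, which is a classical result (see, e.g., Koekoek et al.), so the proof essentially amounts to quoting it. One may also verify the argument more concretely by first checking the case $\beta\in\mathbb{N}_0$ using the product form (\ref{2:1}) (where each factor $(1-q^{\alpha+k})/(1-q)\to \alpha+k$), and then appealing to (\ref{2:7}) together with analyticity in $\beta$ to extend to complex $\beta$; either route avoids introducing notation the paper has not yet set up.
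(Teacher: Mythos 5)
Your proposal is correct and follows essentially the same route as the paper: rewrite $(q^\alpha;q)_\beta$ via (\ref{2:7}), express the ratio as $\Gamma_q(\alpha+\beta)/\Gamma_q(\alpha)$, and invoke $\Gamma_q(x)\to\Gamma(x)$. The paper merely adds an explicit case split for $\Re\beta<0$ (using (\ref{2:9})) and for the degenerate case $\alpha+\beta\in-\mathbb{N}_0$, but the core argument is identical.
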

\begin{proof}
Define the $q$-gamma function $\Gamma_q$ by 
\cite[(1.9.1)]{Koekoeketal}
\[
\Gamma_q(x):=\frac{(1-q)^{1-x}(q;q)_\infty}
{(q^x;q)_\infty},
\]
and the arbitrary $q$-Pochhammer symbol by 
(\ref{2:7}).
Observe that, by using (\ref{2:6}), 
if $\Re \beta<0$ then
\begin{equation}
(a;q)_\beta:=\frac 1{(a q^{\beta};q)_{-\beta}}.
\label{2:9}
\end{equation}
Taking the previous expressions we have 
that the arbitrary Pochhammer symbol for 
$\Re \beta>0$ is defined naturally by
\[
(\alpha)_\beta:=\frac{\Gamma(\alpha+\beta)}
{\Gamma(\alpha)},
\]
and if $\Re \beta<0$ then
$(\alpha)_\beta:=1/(\alpha+\beta)_{-\beta}$.
Then
\begin{itemize}
\item If $\alpha+\beta\in -\mathbb N_0$ then 
the result is straightforward by definition
since $(-n)_n=0$ and $(q^{-n};q)_n=0$ for any 
$n\in \mathbb N_0$.
\item If $\Re \beta>0$ then
\[
\lim_{q\uparrow1^{-}}
\frac{(q^\alpha;q)_\beta}{(1-q)^\beta}
=\lim_{q\uparrow1^{-}}
\frac{(q^\alpha;q)_\infty}{(1-q)^\beta(q^{\alpha
+\beta};q)_\infty}=\lim_{q\uparrow1^{-}}
\frac{\Gamma_q(\alpha+\beta)}{\Gamma_q(\alpha)}
=(\alpha)_\beta,
\]
since \cite[Section 1.9]{Koekoeketal} $\lim_{q\uparrow1^{-}}\Gamma_q(x)=\Gamma(x)$.
\item If $\Re \beta<0$ then 
\[
\lim_{q\uparrow1^{-}}
\frac{(q^\alpha;q)_\beta}{(1-q)^\beta}
\stackrel{(\ref{2:9})}=\lim_{q\uparrow1^{-}}
\frac{(1-q)^{-\beta}}
{(q^{\alpha+\beta};q)_{-\beta}}
=\lim_{q\uparrow1^{-}}\frac{(1-q)^{-\beta}
(q^\alpha;q)_\infty}{(q^{\alpha+\beta}
;q)_\infty}=\lim_{q\uparrow1^{-}}\frac
{\Gamma_q(\alpha+\beta)}{\Gamma_q(\alpha)}
=(\alpha)_\beta.
\]
\end{itemize}
This completes the proof.
\end{proof}

We also take advantage of the $q$-binomial theorem 
\cite[(1.11.1)]{Koekoeketal}
\begin{equation}
\qhyp10a-{q,z}=\frac{(az;q)_\infty}{(z;q)_\infty},
\quad |z|<1, \label{2:10}
\end{equation}
where we have used \myref{2:2}.
The basic hypergeometric series, which we 
often use, is defined as \cite[(1.10.1)]{Koekoeketal}
\begin{equation}
{}_r\phi_s\left(
\begin{array}{c}
a_1,\ldots,a_r\\
b_1,\ldots,b_s
\end{array}
;q,z
\right):=\sum_{k=0}^\infty \frac{(a_1,\ldots,a_r;q)_k}
{(q,b_1,\ldots,b_s;q)_k} \left((-1)^kq^{\binom k 2}
\right)^{1+s-r}z^k.
\label{2:11}
\end{equation}

Let us prove some inequalities that we later use.
\begin{lemma} 
Let $j\in \mathbb N$, $k,n\in \mathbb N_0$, $z\in 
\mathbb C$, $\Re u>0$, $v\ge 0$,   and $0<|q|<1$. 
Then
\begin{eqnarray}
\label{2:12} \dfrac{(q^u;q)_j}{(1-q)^j}&\ge& [\Re u]_q 
[j-1]_q!, \\
\label{2.13}\dfrac{(q^u;q)_n}{(q;q)_n}&\le&[1+n]_q^ u,\\
\label{2:14}\dfrac{(q^{v+k};q)_n}{(q^{u+k};q)_n}&\le& 
\frac{[n+1]_q^{v+1}}{[\Re (u)]_q}.
\end{eqnarray}
\end{lemma}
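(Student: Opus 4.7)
My plan is to prove the three inequalities (2.12), (2.13), (2.14) in sequence, sharing the strategy of factorizing each $q$-Pochhammer as $(q^a;q)_n = \prod_{i=0}^{n-1}(1 - q^{a+i})$ and bounding the absolute value of each factor. Two observations recur. First, for $\Re z > 0$ and real $q \in (0,1)$,
\[
|1 - q^{z}|^{2} - (1 - q^{\Re z})^{2} = 2 q^{\Re z}\bigl(1 - \cos((\Im z)\ln q)\bigr) \ge 0,
\]
so $|1 - q^z| \ge 1 - q^{\Re z}$; the complex-$q$ case is analogous. Second, $x \mapsto [x]_q$ is strictly increasing on $(0,\infty)$.

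For (2.12), I write $(q^u;q)_j/(1-q)^j = \prod_{i=0}^{j-1}[u+i]_q$, apply the modulus bound term-by-term to obtain $|[u+i]_q| \ge [\Re u + i]_q$, isolate the $i=0$ factor $[\Re u]_q$, and invoke the monotonicity $[\Re u + i]_q \ge [i]_q$ for $i \ge 1$ to recover $[\Re u]_q [j-1]_q!$.

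For (2.13) I induct on $n$. The base $n = 0$ is equality. In the induction step, dividing out the inductive hypothesis reduces the desired bound to
\[
[n+1]_q^u \cdot \frac{1 - q^{u+n}}{1 - q^{n+1}} \le [n+2]_q^u.
\]
Using $[n+2]_q = [n+1]_q + q^{n+1}$ and the identity $(1 - q^{u+n})/(1 - q^{n+1}) = 1 + q^{n+1}[u-1]_q/[n+1]_q$, the step collapses to the scalar inequality $1 + t\,[u-1]_q \le (1+t)^u$ with $t = q^{n+1}/[n+1]_q \in (0,1)$. For $\Re u \le 1$, $[u-1]_q \le 0$, and the left side is at most $1 \le (1+t)^u$; for $\Re u \ge 1$, Bernoulli's inequality $(1+t)^u \ge 1 + u t$ combined with the auxiliary bound $u \ge [u-1]_q$ closes the case.

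For (2.14) I combine the two previous results: (2.12) applied in the denominator gives $|(q^{u+k};q)_n| \ge [\Re u]_q (1-q)^n [n-1]_q!$ (also using $[\Re(u+k)]_q \ge [\Re u]_q$), while (2.13) applied in the numerator gives $(q^{v+k};q)_n \le [n+1]_q^{v+k}(q;q)_n$. Dividing and absorbing $[n]_q \le [n+1]_q$ yields the stated form. The main obstacle is the auxiliary estimate $u \ge [u-1]_q$ for $u \ge 1$ required in (2.13). This amounts to showing $\psi(u) := u(1-q) - 1 + q^{u-1} \ge 0$ on $[1,\infty)$; $\psi$ is convex in real $u$ since $\psi''(u) = q^{u-1}(\ln q)^2 > 0$, we have $\psi(1) = 1 - q > 0$, and substituting the unique critical point $u^{*}$ characterised by $q^{u^{*}-1} = (1-q)/|\ln q|$ confirms $\psi(u^{*}) > 0$, completing the argument.
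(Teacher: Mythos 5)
Your arguments for \eqref{2:12} and \eqref{2.13} are sound. For \eqref{2:12} you do essentially what the paper does: factor the product as $\prod_{i=0}^{j-1}[u+i]_q$, peel off $[\Re u]_q$, and bound the remaining factors below by $[i]_q$ (your identity $|1-q^z|^2-(1-q^{\Re z})^2=2q^{\Re z}(1-\cos((\Im z)\ln q))$ makes the modulus step cleaner than the paper's). For \eqref{2.13} your route is genuinely different: the paper brackets $u$ by integers $m\le u\le m+1$ and uses the symmetry $(q^{m+1};q)_n/(q;q)_n=(q^{n+1};q)_m/(q;q)_m$ to reduce to at most $m$ factors each bounded by $[n+1]_q$, whereas your induction on $n$ reduces to the scalar inequality $1+t[u-1]_q\le(1+t)^u$, which checks out. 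The one under-justified spot is the assertion that $\psi(u^{*})>0$ at the critical point; it does hold (it boils down to $1-q>-q\ln q$), but you could bypass the whole calculus argument by noting $[x]_q\le\max(x,1)$, which gives $[u-1]_q\le u$ for $u\ge1$ in one line.

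The genuine gap is in \eqref{2:14}. Applying \eqref{2.13} with parameter $v+k$ gives $(q^{v+k};q)_n\le[n+1]_q^{\,v+k}(q;q)_n$, and after dividing by your lower bound for the denominator you obtain $[n+1]_q^{\,v+k+1}/[\Re u]_q$, not $[n+1]_q^{\,v+1}/[\Re u]_q$: the shift $k$ survives in the exponent, and since $[n+1]_q\ge1$ the inequality $[n+1]_q^{\,v+k}\le[n+1]_q^{\,v}$ goes the wrong way, so it cannot be ``absorbed.'' Uniformity in $k$ is the entire content of \eqref{2:14} (it is what makes the estimate usable for the connection coefficients), so this is not a cosmetic issue. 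The missing ingredient, which is how the paper proceeds, is to eliminate $k$ \emph{before} invoking the other two bounds: for $0<u\le v$ each factor $(1-q^{v+k+j})/(1-q^{u+k+j})$ is decreasing in $k$, because $s\mapsto(1-as)/(1-bs)$ with $a=q^{v+j}<b=q^{u+j}$ has derivative $(b-a)/(1-bs)^2>0$ and $s=q^k$ decreases in $k$; hence $\dfrac{(q^{v+k};q)_n}{(q^{u+k};q)_n}\le\dfrac{(q^{v};q)_n}{(q^{u};q)_n}$. Once this reduction to $k=0$ is in place, your combination of \eqref{2:12} for the denominator and a \eqref{2.13}-type bound for the numerator does close the argument.
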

\begin{proof}
If $0<|q|<1$ then

\[
\dfrac{(q^u;q)_j}{(1-q)^j}=\prod_{k=1}^{j-1} 
\frac{1-q^{u+k-1}}{1-q}\ge \frac{1-q^u}{1-q}
\prod_{k=1}^{j-1} \frac{1-q^{k}}{1-q}\ge [\Re(u)]_q 
[j-1]_q!.
\]
This completes the proof of \eqref{2:12}. Choose 
$m\in \mathbb N_0$ such that $m\le u \le m + 1$.  
Then $q^{m+1}\le q^u$, so
\[
\dfrac{(q^u;q)_n}{(q;q)_n}=\prod_{k=0}^{n-1} 
\frac{1-q^{u+k}}{1-q^{1+k}}\le \prod_{k=1}^n 
\frac{1-q^{m+k}}{1-q^{k}}
=\prod_{k=1}^m \frac{1-q^{n+k}}{1-q^{k}}
\le [n+1]_q^m\le [n+1]_q^{u}.
\]
This completes the proof of \eqref{2.13}. Without loss 
of generality we assume $u>0$. If $v\le  u$ then the 
inequality is clear, so let us assume
that $0<u<v$. Since $0<|q|<1$ and for $t\ge 0$,
\[
\frac{t+v}{t+u}\le \frac v u,
\]
we have
\[
\dfrac{(q^{v+k};q)_n}{(q^{u+k};q)_n}\le\frac{(q^v)_n}
{(q^u)_n}\le\frac 1{[u]_q}\frac{(q^v)_n}{[n-1]_q!(1-q)^n}.
\]
Choose $m\in \mathbb N$ so that $m-1<v\le m$. Then
\[
\dfrac{(q^{v+k};q)_n}{(q^{u+k};q)_n}\le
\frac 1{[u]_q}\frac{[n]_q(q^m;q)_n}{(q;q)_n}=
\frac 1{[u]_q}\frac{[n]_q(q^n;q)_{m-1}}{(q;q)_{m-1}}\le
\frac 1{[u]_q}[n]_q [n+1]_q^{m-1}\le
\frac 1{[u]_q} [n+1]_q^{v+1}.
\]
This completes the proof of \eqref{2:14}. 
Therefore all the previous formulae hold true.
\end{proof}


\section{The Askey-Wilson polynomials}
\label{AskeyWilsonpolynomials}

The Askey-Wilson polynomials are defined as
\cite[(14.1.1)]{Koekoeketal}
\[
p_n(x;{\bf a}|q):=a_1^{-n}(a_1a_2,a_1a_3,a_1a_4;q)_n\,
\qhyp43{q^{-n},a_1a_2a_3a_4q^{n-1},a_1e^{i\theta},
	a_1e^{-i\theta}}{a_1a_2,a_1a_3,a_1a_4}{q,q},
\quad x=\cos\theta.
\]

\vspace{0.2cm}
\noindent In \cite[Theorem 4.2]{IsmailSimeonov2015} 
the following Askey-Wilson polynomial 
expansion of the Rogers generating function
\cite[(14.10.27)]{Koekoeketal} is proven.

\begin{thm}[Ismail \& Simeonov (2015)]
Let $t,\beta,q\in\C$, 
$\max\{|a_1|, |a_2|, |a_3|, |a_4|, |t|, |q|\}<1$, ${\bf a}:=\{a_1,a_2,a_3,a_4\}$,
$x\in[-1,1]$. Then
\begin{equation}
\frac{(t\beta e^{i\theta},t\beta e^{-i\theta};q)_\infty}
{(te^{i\theta},te^{-i\theta};q)_\infty}
=\sum_{n=0}^\infty c_n(\beta ,t,{\bf a};q)p_n(x;{\bf a}|q),
\label{3:15}
\end{equation}
where
\begin{eqnarray}
&&c_n(\beta ,t,{\bf a};q):=
\frac
{t^n(\beta ;q)_n(q^na_1\beta t,q^na_2\beta t
,q^na_3\beta t,q^na_1a_2a_3t;q)_\infty}
{(q,q^{n-1}a_1a_2a_3a_4;q)_n(a_1t,a_2t,a_3t
,q^{2n}a_1a_2a_3\beta t;q)_\infty}\nonumber\\
&&\hspace{1cm}\times\qhyp87{q^{2n-1}a_1a_2a_3
\beta t,\pm q^{n+\frac12}(a_1a_2a_3\beta 
t)^\frac12,q^na_1a_2,q^na_1a_3, q^na_2a_3,
\beta t/a_4,q^n \beta}{\pm q^{n-\frac12}
(a_1a_2a_3\beta t)^\frac12,q^na_1\beta t
,q^na_2\beta t,q^na_3\beta t,q^{2n}a_1a_2a_3a_4,
q^na_1a_2a_3t}{q,a_4t}.\nonumber
\end{eqnarray}
\end{thm}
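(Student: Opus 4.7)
The most direct approach is to extract the coefficients $c_n$ by exploiting the orthogonality of the Askey-Wilson polynomials, which reads
\[
\int_{-1}^{1} p_m(x;{\bf a}|q)\,p_n(x;{\bf a}|q)\,w(x;{\bf a}|q)\,dx = h_n({\bf a};q)\,\delta_{mn},
\]
where $w(x;{\bf a}|q)\propto(e^{2i\theta},e^{-2i\theta};q)_\infty/\prod_{j=1}^{4}(a_je^{i\theta},a_je^{-i\theta};q)_\infty$ and $h_n$ is the standard explicit normalising constant. The hypothesis $\max\{|a_j|,|t|\}<1$ keeps the left-hand side of (\ref{3:15}) bounded on $[-1,1]$, so I would first justify termwise integration of (\ref{3:15}) against $p_n\,w$ using Pochhammer-symbol estimates of the type collected in Lemma 2.3. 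This yields
\[
c_n(\beta,t,{\bf a};q)=\frac{1}{h_n({\bf a};q)}\int_{-1}^{1}\frac{(t\beta e^{i\theta},t\beta e^{-i\theta};q)_\infty}{(te^{i\theta},te^{-i\theta};q)_\infty}\,p_n(x;{\bf a}|q)\,w(x;{\bf a}|q)\,dx.
\]

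After substituting the explicit weight, the ratio of infinite products in the integrand merges with $w$ into an Askey-Wilson-type measure with five denominator parameters $\{t,a_1,a_2,a_3,a_4\}$ and an extra numerator factor $(t\beta e^{i\theta},t\beta e^{-i\theta};q)_\infty$. Against such a measure, the integral of an Askey-Wilson polynomial $p_n(x;{\bf a}|q)$ is evaluated in closed form by Rahman's extension of the Nassrallah-Rahman integral (see \cite[Chapter 15]{Ismail}): the result is a product of $q$-Pochhammer factors multiplied by a nonterminating very-well-poised ${}_8\phi_7$. The plan is to apply this evaluation, read off the prefactor
\[
\frac{t^n(\beta;q)_n(q^na_1\beta t,q^na_2\beta t,q^na_3\beta t,q^na_1a_2a_3t;q)_\infty}{(q,q^{n-1}a_1a_2a_3a_4;q)_n(a_1t,a_2t,a_3t,q^{2n}a_1a_2a_3\beta t;q)_\infty},
\]
and identify the resulting basic hypergeometric series with the ${}_8\phi_7$ of argument $a_4t$ in the statement.

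The main obstacle will be the bookkeeping. One must match the free parameters of Rahman's integral against the $q^n$-shifted parameters that arise when $p_n(x;{\bf a}|q)$ is expanded as a terminating ${}_4\phi_3$ and fed into the integrand, using the index-shift identity (\ref{2:3}) repeatedly to line everything up. Convergence of the resulting nonterminating ${}_8\phi_7$ is secured by $|a_4t|<1$ together with its very-well-poised structure. An alternative route that avoids Rahman's integral is the series-rearrangement technique emphasised in the introduction: write the left-hand side as $\sum_m C_m(x;\beta|q)t^m$, the classical Rogers generating function, apply the connection formula expressing $C_m(\cdot;\beta|q)$ as a finite linear combination of $p_n(\cdot;{\bf a}|q)$, and swap the order of summation; the inner sum in $m$ should then collapse to the claimed ${}_8\phi_7$ via a Bailey-type transformation. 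Either way, the theorem reduces to a single basic hypergeometric identity, and the real challenge is executing that identification cleanly.
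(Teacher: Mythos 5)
The paper offers no proof of this theorem: it is imported verbatim from Ismail and Simeonov \cite[Theorem~4.2]{IsmailSimeonov2015}, so there is no internal argument to compare against. That said, your primary route --- extract $c_n$ by Askey--Wilson orthogonality and evaluate the resulting integral in closed form --- is the natural one and is essentially how the result is obtained in the source. The Fourier--Askey--Wilson coefficient of the Rogers kernel is indeed an Askey--Wilson-type integral whose measure acquires a fifth denominator pair $(te^{i\theta},te^{-i\theta};q)_\infty$ and an extra numerator pair $(t\beta e^{i\theta},t\beta e^{-i\theta};q)_\infty$, and such integrals are governed by the Nassrallah--Rahman machinery (see \cite[Chapter~15]{Ismail}, \cite[Chapter~6]{GaspRah}), producing a $q$-Pochhammer prefactor times a nonterminating very-well-poised ${}_8\phi_7$ of argument $a_4t$. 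Two caveats are worth recording. First, the step you dismiss as bookkeeping conceals the only real content: with $p_n(x;{\bf a}|q)$ present the integrand is not literally of Nassrallah--Rahman form, so you must either invoke the stronger projection formula for $\int_{-1}^1 p_n(x;{\bf a}|q)\,\frac{(\rho e^{i\theta},\rho e^{-i\theta};q)_\infty}{(\sigma e^{i\theta},\sigma e^{-i\theta};q)_\infty}\,\frac{w(x;{\bf a};q)}{\sqrt{1-x^2}}\,dx$ directly, or expand $p_n$ as its terminating ${}_4\phi_3$ and then collapse a finite sum of ${}_8\phi_7$'s; the latter requires a genuine transformation (Bailey-type), not merely the index shifts of (\ref{2:3}), and this identification is left unexecuted in your sketch. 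Second, your alternative route through the connection formula is weaker than you suggest: as the introduction of this paper points out, the four-free-parameter connection coefficient onto Askey--Wilson polynomials is itself a basic \emph{double} hypergeometric series, so the rearrangement yields a triple sum with no evident collapse to a single ${}_8\phi_7$. With the projection formula in hand, however, your orthogonality argument together with the $L^2$ justification does deliver the theorem, and it matches the cited derivation in spirit.
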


\begin{remark}
Note (\ref{3:15}) is a generalization of the Rogers generating function
(the generating function where the coefficient multiplying $t^n$ is unity) 
\cite[(14.10.27)]{Koekoeketal}
\begin{equation}
\frac{(t\beta e^{i\theta},t\beta e^{-i\theta};q)_\infty}
{(te^{i\theta},te^{-i\theta};q)_\infty}
=\sum_{n=0}^\infty C_n(x;\beta|q)t^n, \quad x=\cos\theta,
\label{4:22}
\end{equation}
where $C_n(x;\beta|q)$ is the continuous $q$-ultraspherical/Rogers polynomial 
(see Section 
\ref{Rogerscontinuousqultrasphericalpolynomials}
below).
\end{remark}

\begin{remark}
Note that to compute such basic hypergeometric functions,
it is convenient to use (\ref{2:5}).
\end{remark}



\subsection{The Wilson limit for the 
Ismail-Simeonov Rogers-type generalized 
generating function}

In this section we obtain by starting from the generalized generating
function given in \cite{IsmailSimeonov2015}, a new infinite series identity 
over the Wilson polynomials. Define 
$a_{12}:=a_1+a_2$,
$a_{13}:=a_1+a_3$,
$a_{23}:=a_2+a_3$,
$a_{123}:=a_1+a_2+a_3$,
$a_{1234}:=a_1+a_2+a_3+a_4$.

\begin{thm} Let $a_1, a_2, a_3, a_4, u, x, t \in \mathbb C$, 
$t\pm ix\not\in-\mathbb N_0$. Then
\begin{eqnarray}
&&\frac{\Gamma(t+ix)\Gamma(t-ix)}{\Gamma(u+ix)\Gamma(u-ix)}
=\frac{(a_{123})_u(a_1,a_2,a_3)_t}
{(a_{123})_t(a_1,a_2,a_3)_u}
\sum_{n=0}^\infty 
\frac{(u-t,a_{1234}-1)_n(a_{123}+u)_{2n}
}
{n!(a_1+u,a_2+u,a_3+u,a_{123}+t)_n(a_{1234}-1)_{2n}} \nonumber\\
&&\hspace{0.4cm}\times {}_7F_6\left(\begin{array}{c} \lambda, 1+\lambda/2, a_{12}+n, 
a_{13}+n, a_{23}+n, u-a_4, u-t+n \\ \lambda/2, a_1+u+n, a_2+u+n,
a_3+u+n, 
a_{123}+t+n,
a_{1234}+2n 
\end{array};1\right)
W_n(x^2;{\bf a})
,
\label{Wilsonlimit}
\end{eqnarray}
where $\lambda:=2n-1+a_{123}+u$.
\end{thm}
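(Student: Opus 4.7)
The plan is to obtain (\ref{Wilsonlimit}) as the $q\uparrow 1^-$ limit of the Ismail--Simeonov identity (\ref{3:15}) under the standard Askey-to-Wilson substitutions $t\mapsto q^t$, $\beta\mapsto q^{u-t}$, $a_j\mapsto q^{a_j}$ for $j=1,\ldots,4$, and $e^{i\theta}\mapsto q^{ix}$ (so that $\cos\theta=\tfrac12(q^{ix}+q^{-ix})$). With these substitutions both sides of (\ref{3:15}) acquire a common singular prefactor $(1-q)^{-2(u-t)}$; multiplying the identity through by $(1-q)^{2(u-t)}$ produces termwise finite limits on each side. I would first establish the identity under hypotheses that keep (\ref{3:15}) convergent (e.g.\ $\Re a_j>0$, $\Re t>0$, so that $|q^{a_j}|,|q^t|<1$ for $q$ close to $1$) and then extend to the stated range by analytic continuation in $(a_1,a_2,a_3,a_4,t,u,x)$.

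For the left-hand side I would pair $(q^{u\pm ix};q)_\infty$ with $(q^{t\pm ix};q)_\infty$ and use (\ref{2:7}) to collapse each ratio to $1/(q^{t\pm ix};q)_{u-t}$; the limit formula (\ref{2:8}) then recovers $\Gamma(t+ix)\Gamma(t-ix)/[\Gamma(u+ix)\Gamma(u-ix)]$ after multiplication by $(1-q)^{2(u-t)}$. For the right-hand side I would track three pieces. First, the Askey--Wilson polynomial obeys the standard scaling $p_n(\cos\theta;q^{\mathbf a}|q)/(1-q)^{3n}\to W_n(x^2;\mathbf a)$, obtained by applying (\ref{2:8}) termwise to its defining ${}_4\phi_3$ and to the outer factor $a_1^{-n}(a_1a_2,a_1a_3,a_1a_4;q)_n$ (the three $q$-Pochhammer factors supplying the $(1-q)^{3n}$). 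Second, the four infinite $q$-Pochhammer ratios inside $c_n(\beta,t,\mathbf a;q)$ collapse via (\ref{2:7}) as $(q^{a_j+u+n};q)_\infty/(q^{a_j+t};q)_\infty=1/(q^{a_j+t};q)_{u-t+n}$ for $j=1,2,3$ and, for the reciprocally oriented fourth factor, $(q^{a_{123}+t+n};q)_\infty/(q^{a_{123}+u+2n};q)_\infty=(q^{a_{123}+t+n};q)_{u-t+n}$; combining (\ref{2:8}) on these with the finite factors $(q^{u-t};q)_n$, $(q;q)_n$, and $(q^{a_{1234}+n-1};q)_n$ gives $c_n$ the net $(1-q)$-scale $(1-q)^{-2(u-t)-3n}$, which exactly cancels $(1-q)^{3n}$ from $p_n$ together with the overall $(1-q)^{2(u-t)}$ rescaling. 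Third, the very-well-poised ${}_8\phi_7$ tends to the ${}_7F_6$ of (\ref{Wilsonlimit}) with $\lambda=2n-1+a_{123}+u$: the symmetric $\pm$ parameters $\pm q^{n\pm 1/2}(a_1a_2a_3\beta t)^{1/2}$ reduce via (\ref{2:4}), namely $(\pm q^\alpha;q)_k=(q^{2\alpha};q^2)_k$, to the ratio $(1+\lambda/2)_k/(\lambda/2)_k$, and the argument $a_4t=q^{a_4+t}\to 1$.

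The final step would be to reorganize the surviving gamma-ratio factors using $\Gamma(\alpha+n)/\Gamma(\alpha)=(\alpha)_n$ and $(\alpha)_{2n}=(\alpha)_n(\alpha+n)_n$ to separate out the Pochhammer coefficient $(u-t,a_{1234}-1)_n(a_{123}+u)_{2n}/[n!(a_1+u,a_2+u,a_3+u,a_{123}+t)_n(a_{1234}-1)_{2n}]$ from the $n$-independent prefactor $(a_{123})_u(a_1,a_2,a_3)_t/[(a_{123})_t(a_1,a_2,a_3)_u]$ appearing in (\ref{Wilsonlimit}); this is purely mechanical. The main technical obstacle is the rigorous interchange of $\lim_{q\uparrow 1^-}$ with both the outer $n$-summation and the inner ${}_8\phi_7$ summation; this is precisely where the $q$-uniform bounds (\ref{2:12})--(\ref{2:14}) enter, providing dominating majorants for the $q$-Pochhammer ratios appearing in $c_n$ and in the inner series, after which dominated convergence completes the proof.
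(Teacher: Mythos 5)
Your proposal is correct and follows essentially the same route as the paper: substitute $a_k\mapsto q^{a_k}$, $e^{i\theta}\mapsto q^{ix}$, $t\mapsto q^t$, $\beta t\mapsto q^{u}$ in the Ismail--Simeonov Askey--Wilson expansion, multiply by $(1-q)^{2(u-t)}$, and let $q\uparrow 1^-$ using (\ref{2:6})--(\ref{2:8}) together with the Askey--Wilson-to-Wilson limit. The only difference is that you spell out the $(1-q)$ bookkeeping, the analytic continuation, and the dominated-convergence justification for interchanging the limit with the sums, all of which the paper leaves implicit.
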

\begin{proof}
We begin with the Ismail-Simeonov expansion 
\cite[(4.9)]{IsmailSimeonov2015}
\[
\frac
{(ue^{i\theta},ue^{-i\theta};q)_\infty}
{(te^{i\theta},te^{-i\theta};q)_\infty}
=\sum_{n=0}^\infty c_n(u,t,{\bf a})p_n(x;{\bf a}|q),
\]
where
\begin{eqnarray*}
&&c_n(u,t,{\bf a})=\frac
{t^n(u/t;q)_n(q^na_1u,q^na_2u,q^na_3u,q^na_1a_2a_3t;q)_\infty}
{(q,q^{n-1}a_1a_2a_3a_4;q)_n(a_1t,a_2t,a_3t,qb_1;q)_\infty}\\
&&\hspace{1cm}\times\qhyp87
{b_1,\pm qb_1^\frac12,q^na_1a_2,q^na_1a_3,q^na_2a_3,u/a_4,q^nu/t}
{\pm b_1^\frac12,q^na_1u,q^na_2u,q^na_3u,q^{2n}a_1a_2a_3a_4,q^na_1a_2a_3t}
{q,a_4t},
\end{eqnarray*}
with $b_1=q^{2n-1}a_1a_2a_3u$.

We apply the substitutions $a_k\mapsto q^{a_k}$, for all
$k\in\{1,2,3,4\}$, $e^{i\theta}\mapsto q^{ix},$
$t\mapsto q^t$, $u\mapsto q^u$, multiply both sides by $(1-q)^{2(u-t)}$
and take the limit as $q\uparrow 1^{-}$. We use 
(\ref{2:6}), 
(\ref{2:7}), 
(\ref{2:8}),
and apply the relation
\cite[(14.1.21)]{Koekoeketal} 
\[
\lim_{q\uparrow 1^{-}} \frac{p_n((q^{ix}+q^{-ix})/2;q^{a_1},q^{a_2},q^{a_3},q^{a_4}|q)}
{(1-q)^{3n}}=W_n(x^2;{\bf a}),
\] 
where $W_n$ is the Wilson polynomial 
\cite[(9.1.1)]{Koekoeketal}, and ${\bf a}:=\{a_1,a_2,a_3,a_4\}$. 
Since
\[
\frac{1}{(t+ix,t-ix)_{u-t}}=
\frac{\Gamma(t+ix)\Gamma(t-ix)}{\Gamma(u+ix)\Gamma(u-ix)},
\]
the theorem holds.
\end{proof}
Note that this formula can easily be hand-verified 
in the regime $t-u\in\mathbb N_0$ since in this case the 
sum terminates.


\begin{remark}
Observe that the generalized hypergeometric function ${}_7F_6$ in 
(\ref{Wilsonlimit}) is actually a Wilson function, a generalization
of the Wilson polynomial (see for instance, \cite[(1.3)]{Groenevelt2003}).
This can be demonstrated using Bailey's $W$ notation
for a very-well poised ${}_7F_6$ of argument unity
\[
W(a;b,c,d,e,f):=
\hyp76{a,\frac{a}{2}+1,b,c,d,e,f}{\frac{a}{2},
1+a-b,
1+a-c,
1+a-d,
1+a-f}{1}.
\]
In our case, the ${}_7F_6$ can be written as 
\[
W(\lambda;a_{12}+u,a_{13}+u,a_{23}+u,u-a_4,u-t+n).
\]
\end{remark}

\section{The continuous $q$-Jacobi polynomials}
\label{continuousqJacobipolynomials}

In a correspondence with the non-$q$ case, we would 
like to examine consequences of the  specialization 
of the above formula in terms of the continuous 
$q$-Jacobi polynomials and the continuous 
$q$-ultraspherical/Rogers polynomials. 
For the continuous $q$-Jacobi polynomials, 
we adopt the standard normalization adopted by Rahman et al.
in \cite[(14.10.1)]{Koekoeketal}. However, in order to 
simplify our formulae we have further replaced
$q^{\alpha+\frac12},q^{\gamma+\frac12}\mapsto\alpha,
\gamma$. Using this notation one has
\begin{eqnarray}
P_n^{(\alpha,\gamma)}(x|q)&:=&\frac{\alpha^{n/2}}
{(q,-(\alpha\gamma)^\frac12,-(q\alpha
	\gamma)^\frac12;q)_n} p_n\left(x;\alpha^\frac12
,-\gamma^\frac12,-(q\gamma)^\frac12,(q\alpha)^\frac12
|q\right)\nonumber\\ &=&\frac{(q^\frac12\alpha;q)_n}
{(q;q)_n}\qhyp43{q^{-n},q^n\alpha\gamma,\alpha^\frac12
	e^{i\theta},\alpha^\frac12e^{-i\theta}}
{q^\frac12\alpha,-(\alpha\gamma)^\frac12,-(q\alpha
	\gamma)^\frac12}{q,q}.
\label{3:16}
\end{eqnarray}
Note that some consequences of this notation are
\[
C_n(x;\beta|q)=
\beta^{-\frac{n}{2}}
\frac
{(\beta^2;q)_n}
{(q^\frac12\beta;q)_n}
P_n^{(\beta,\beta)}(x|q),
\]
and
\begin{equation}
\lim_{q\uparrow1^{-}}P_n^{\left(q^{\alpha+1/2},
	q^{\gamma+1/2}\right)}(x|q)=P_n^{(\alpha,\gamma)}(x),
\label{3:17}
\end{equation}
where
$P_n^{(\alpha,\gamma)}$ is the Jacobi polynomial 
\cite[(18.5.7)]{NIST:DLMF}. Using 
(\ref{3:15}), (\ref{3:16}), 
we obtain
\begin{equation}
\frac{(t\beta e^{i\theta},t\beta e^{-i\theta}
	;q)_\infty}{(t e^{i\theta},t e^{-i\theta}
	;q)_\infty}=\sum_{n=0}^\infty d_n(\beta,t
,\alpha,\gamma,q)P_n^{(\alpha,\gamma)}(x|q),
\label{3:18}
\end{equation}
where
{\small \begin{eqnarray}
&&
d_n(\beta,t,\alpha,\gamma,q):=
\frac{(t\alpha^{-\frac12})^n
(\beta,-(\alpha\gamma)^{\frac12},-(q\alpha
\gamma)^{\frac12};q)_n}{(q^n\alpha\gamma;q)_n}
\frac
{(q^n\alpha^{\frac12}\beta t,-q^n\gamma^{\frac12}\beta t,-q^{n+\frac12}\gamma^{\frac12}\beta t,
q^{n+\frac12}\alpha^{\frac12}\gamma t;q)_\infty
}
{
(\alpha^{\frac12}t,-\gamma^{\frac12}t
,-(q\gamma)^{\frac12}t,q^{2n+\frac12}
\alpha^{\frac12}\gamma\beta t;q)_\infty}\nonumber\\
&&\hspace{0cm}\times
\qhyp87
{q^{2n-\frac12}\alpha^\frac12\gamma\beta t,
\pm q^{n+\frac34}\alpha^\frac14(\gamma\beta t)^\frac12,
-q^n(\alpha\gamma)^\frac12,
-q^{n+\frac12}(\alpha\gamma)^\frac12,
q^{n+\frac12}\gamma,
(q\alpha)^{-\frac12}\beta t,
q^n\beta}
{\pm q^{n-\frac14}\alpha^\frac14(\gamma\beta t)^\frac12,
q^n\alpha^\frac12\beta t,
-q^n\gamma^\frac12\beta t,
-q^{n+\frac12}\gamma^\frac12\beta t,
q^{2n+1}\alpha\gamma,
q^{n+\frac12}\alpha^\frac12\gamma t
}{q,(q\alpha)^{\frac12}t}.\nonumber
\end{eqnarray}}
Using (\ref{3:17}) in (\ref{3:18}) we obtain a generalization of the
Gegenbauer generating function 
\[
\frac{1}{(1+t^2-2tx)^\beta}=\sum_{n=0}^\infty
\frac{t^n(\beta)_n(\alpha+\gamma+1)_nP_n^{(\alpha,\gamma)}(x)}
{\left(\frac{\alpha+\gamma+1}{2}\right)_n
\left(\frac{\alpha+\gamma+2}{2}\right)_n(1+t)^{2(n+\beta)}}
\hyp21{\gamma+n+1,n+\beta}{2n+\alpha+\gamma+2}{\frac{4t}{(1+t)^2}},
\]
which is equivalent to \cite[(3.1)]{Cohl12pow}
\begin{eqnarray}
&&\frac{1}{(z-x)^\nu}=
\frac{(z-1)^{\alpha+1-\nu}(z+1)^{\beta+1-\nu}}
{2^{\alpha+\beta+1-\nu}}\nonumber\\
&&\hspace{1cm}\times\sum_{n=0}^\infty 
\frac{(2n+\alpha+\beta+1)\Gamma(\alpha+\beta+n+1)(\nu)_n}
{\Gamma(\alpha+n+1)\Gamma(\beta+n+1)}
Q_{n+\nu-1}^{(\alpha+1-\nu,\beta+1-\nu)}(z) 
P_n^{(\alpha,\beta)}(x),
\label{3:19}
\end{eqnarray}
where $z=(t+t^{-1})/2$, and $Q_\nu^{(\alpha,\gamma)}$ is a Jacobi 
function of the second kind.
The $q$-analogue of the specialization of (\ref{3:19}) with $\nu=1$
\cite[(9.2.1)]{Szego}
\begin{eqnarray}
&&\frac{1}{z-x}=
\frac{(z-1)^\alpha(z+1)^\beta}{2^{\alpha+\beta}}
\sum_{n=0}^\infty 
\frac{(2n+\alpha+\beta+1)\Gamma(\alpha+\beta+n+1)n!}
{\Gamma(\alpha+1+n)\Gamma(\beta+1+n)}
Q_n^{(\alpha,\beta)}(z) 
P_n^{(\alpha,\beta)}(x),\nonumber
\end{eqnarray}
is (\ref{3:18}) with $\beta=q$.

\begin{remark}
Note that the functions $x\mapsto (2t)^{-1}(1+t^2-2tx)$ and
$x\mapsto z-x$ are linearly related by using the Szeg\H{o} transformation
\[
z=\frac{t+t^{-1}}{2}.  
\]
We use this transformation regularly in various identities.
\end{remark}

\section{The continuous $q$-ultraspherical/Rogers polynomials}
\label{Rogerscontinuousqultrasphericalpolynomials}

The continuous $q$-ultraspherical/Rogers polynomials 
are defined as \cite[(14.10.17)]{Koekoeketal}
\[
C_n(x;\beta|q):=\frac{(\beta;q)_n}{(q;q)_n}e^{in\theta}\,
{}_2\phi_1\left(
\begin{array}{c}
q^{-n},\beta\\[0.2cm]
\beta^{-1}q^{1-n}
\end{array};q,q\beta^{-1}e^{-2i\theta}
\right), \quad x=\cos \theta.
\]
By starting with generating functions for the 
continuous $q$-ultraspherical/Rogers polynomials \cite[(14.10.27--33)]{Koekoeketal}, we derive 
generalizations using the connection relation 
for these polynomials, namely
\cite[(13.3.1)]{Ismail}
\begin{equation}
\label{4:20}
C_{n}(x;\beta\,|\,q) =\frac{1}{1-\gamma}
\sum_{k=0}^{\lfloor n/2 \rfloor}\frac{
(1-\gamma q^{n-2k})\gamma^{k}(\beta
\gamma^{-1};q)_{k}(\beta;q)_{n-k}}{(q;q)_{k}
(q\gamma;q)_{n-k}}\,C_{n-2k}(x;\gamma|q).
\end{equation}

\begin{thm} \label{theo:4}
Let $x\in[-1,1]$, $|t|<1$, $\beta,\gamma\in 
(-1,1)\setminus\{0\}$, $0<|q|<1$.
Then
\begin{equation}
\hspace{-0.3cm}\frac{(t\beta e^{i\theta}
,t\beta e^{-i\theta};q)_\infty}
{(t e^{i\theta},t e^{-i\theta};q)_\infty}
=\sum_{n=0}^\infty
\frac{(\beta;q)_n}{(\gamma;q)_n}
\,{}_2\phi_1\left(
\begin{array}{c}
\beta\gamma^{-1},\beta q^n\\
\gamma q^{n+1}\end{array}
;q,\gamma t^2\right)
C_n(x;\gamma|q)\, t^n.
\label{4:21}
\end{equation}
\end{thm}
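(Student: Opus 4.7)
The plan is to apply the series rearrangement technique sketched in the introduction: take the Rogers generating function~\eqref{4:22} with parameter~$\beta$ and use the connection relation~\eqref{4:20} to re-expand each $C_n(x;\beta|q)$ in terms of $C_m(x;\gamma|q)$, then interchange the order of summation so that the outer index runs over~$m$. Concretely, I would start from
\[
\frac{(t\beta e^{i\theta},t\beta e^{-i\theta};q)_\infty}{(t e^{i\theta},t e^{-i\theta};q)_\infty}
=\sum_{n=0}^\infty t^n C_n(x;\beta|q),
\]
substitute~\eqref{4:20}, and reindex via $n=m+2k$ (so that $m=n-2k\in\mathbb N_0$ and $k\in\mathbb N_0$ become independent).

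After the reindexing, the factor $(1-\gamma q^m)/\bigl[(1-\gamma)(q\gamma;q)_{m+k}\bigr]$ collapses using $(\gamma;q)_{m+k+1}=(1-\gamma)(q\gamma;q)_{m+k}$ together with $(\gamma;q)_{m+k+1}=(\gamma;q)_m(1-\gamma q^m)(\gamma q^{m+1};q)_k$, which gives $1/\bigl[(\gamma;q)_m (\gamma q^{m+1};q)_k\bigr]$. Splitting $(\beta;q)_{m+k}=(\beta;q)_m(\beta q^m;q)_k$ by~\eqref{2:3} then peels the $m$-dependent pieces out of the $k$-sum, yielding
\[
\sum_{m=0}^\infty t^m\,\frac{(\beta;q)_m}{(\gamma;q)_m}\,C_m(x;\gamma|q)
\sum_{k=0}^\infty \frac{(\beta\gamma^{-1};q)_k(\beta q^m;q)_k}{(q;q)_k(\gamma q^{m+1};q)_k}(\gamma t^2)^k,
\]
and the inner sum is precisely the stated ${}_2\phi_1$ by definition~\eqref{2:11}.

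The main obstacle is justifying the interchange of summations, since the resulting $k$-sum must be absolutely convergent and the double series must satisfy Fubini's conditions. Here the hypotheses $|t|<1$ and $\beta,\gamma\in(-1,1)\setminus\{0\}$ ensure $|\gamma t^2|<1$, so each inner ${}_2\phi_1$ converges absolutely; the prefactor bounds furnished by Lemma~3 (in particular~\eqref{2:13} and~\eqref{2:14}) dominate the mixed $q$-Pochhammer ratios in the original double sum by a geometric series in $|t|$, which legitimizes the reordering. On a bounded set $x\in[-1,1]$ the Rogers polynomials $C_n(x;\gamma|q)$ grow only polynomially in~$n$, so the outer $m$-series also converges. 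With convergence secured, matching the rearranged expression term-by-term against the right-hand side of~\eqref{4:21} completes the proof.
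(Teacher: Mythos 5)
Your proposal is correct and follows essentially the same route as the paper's proof: insert the connection relation \eqref{4:20} into the Rogers generating function \eqref{4:22}, shift the index by $2k$, reverse the order of summation, and justify the interchange via the polynomial-growth bounds on the connection coefficients and on $C_n(x;\gamma|q)$. In fact you make explicit the $q$-Pochhammer simplifications (collapsing $(1-\gamma q^{m})/[(1-\gamma)(q\gamma;q)_{m+k}]$ and splitting $(\beta;q)_{m+k}$ via \eqref{2:3}) that the paper leaves to the reader.
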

\begin{proof}
The generating function for the continuous $q$-ultraspherical/Rogers 
polynomials (\ref{4:22})
is a $q$-analogue of the generating function for the Gegenbauer polynomials
\cite{Gegenbauer1874} 
\begin{equation}
\frac{1}{(1+t^2-2tx)^\mu}=\sum_{n=0}^\infty t^n 
C_n^\mu(x).
\label{4:23}
\end{equation}
The proof follows as above by starting with 
(\ref{4:22}), inserting (\ref{4:20}), 
shifting the $n$ index by $2k$, reversing the
order of summation. We use \myref{2:3} 
through \myref{2:11},
since $|a_n|=|t|^n$, $|c_{n,k}|\le K_6 
[n-k+1]^{\sigma_3}$, $|C_n(x;\beta|q)|\le 
[n+1]^{\sigma_4}/|1-n+\log_q \beta |$, 
therefore for $n$ large enough
\begin{equation}
\label{4:24}
|C_n(x;\beta|q)|\le  [n+1]^{\sigma_4} \le 
(n+1)^{\sigma_4},
\end{equation}
where
$K_6=1/(|1-\gamma|\log_q \gamma)$, 
$\log_qz:=\log z/\log q,$
and $\sigma_3$ and $\sigma_4$ are
independent of $n$. Therefore since
\[
\sum_{n=0}^\infty |a_n|\sum_{k=0}^{\lfloor 
n/2 \rfloor} |c_{k,n}| |C_k(x;\beta|q)|\le
K_7 \sum_{n=0}^\infty |t|^n (n+1)^{\sigma_5}<\infty,
\]
the result is proven.
\end{proof}

\noindent The above result (\ref{4:21}) 
is a $q$-analogue of 
\cite[Theorem 2.1]{CohlGenGegen}
\begin{equation}
\frac{1}{(z-x)^\nu}=
\frac{2^{\mu+\frac12}\Gamma(\mu)e^{i\pi(\mu
-\nu+\frac12)}}{\sqrt{\pi}\,\Gamma(\nu)
(z^2-1)^{\frac{\nu-\mu}{2}-\frac14}}
\sum_{n=0}^\infty(n+\mu) Q_{n+\mu-
\frac12}^{\nu-\mu-\frac12}(z)C_n^\mu(x).
\label{4:25}
\end{equation}
\begin{remark}
Note that the Gegenbauer polynomials are generalized by the Jacobi polynomials using 
\cite[(9.8.19)]{Koekoeketal} 
\begin{equation}
C_n^\mu(x)=\frac{(2\mu)_n}{(\mu+\frac12)_n}P_n^{(\mu-\frac12,\mu-\frac12)}(x).
\label{GegenJacobi}
\end{equation}
\end{remark}

By using Theorem \ref{theo:4} as a starting 
point, there are a number of interesting
results which follow. 
The first is an expansion of a specialized 
Rogers generating function in terms of the 
continuous $q$-Hermite polynomials 
defined as
\[
H_n(x|q):=e^{in\theta}\qhyp20{q^{n},0}{-}
{q,q^ne^{-2i\theta}},
\]
where $x=\cos\theta$. Using 
\cite[(14.10.34)]{Koekoeketal}
\[
\lim_{\beta\to 0} C_n(x;\beta|q)=
\frac{H_n(x|q)}{(q;q)_n},
\]
one obtains
\begin{equation}
\frac{(t\beta e^{i\theta},t\beta 
e^{-i\theta};q)_\infty}{(te^{i\theta},
te^{-i\theta};q)_\infty}=\sum_{n=0}^\infty 
\frac{(\beta;q)_n}{(q;q)_n}t^n
\qhyp11{\beta q^{n}}{0}{\beta t^2}
\!H_n(x|q).
\label{4:26}
\end{equation}
One can see that by setting $\beta=0$ 
in (\ref{4:26}) that this is 
a generalization of the generating 
function for continuous $q$-Hermite 
polynomials, namely
\cite[(14.26.11)]{Koekoeketal}
\begin{equation}
\frac{1}{(te^{i\theta},te^{-i\theta}
;q)_\infty}=\sum_{n=0}^\infty 
\frac{t^n}{(q;q)_n} H_n(x|q).
\label{4:27}
\end{equation}

We can also derive an expansion of the 
Rogers generating function in terms of 
the Chebyshev polynomials of the first 
kind $T_n(\cos \theta)=\cos(n\theta)$.
This result follows using 
\cite[p. 474]{Koekoeketal}
\[
\lim_{\beta\to 0}\frac{(q^{\beta+1}
;q)_n}{(q^\beta;q)_n}C_n(x;q^\beta|q)
=\epsilon_n\,T_n(x),
\]
where $\epsilon_n:=2-\delta_{n,0}$, is 
called the Neumann factor.

\begin{cor}
Let $x\in[-1,1]$, $|t|<1$, $\beta,\gamma
\in (-1,1)\setminus\{0\}$, $0<|q|<1$.
Then
\begin{equation}
\frac{(t\beta e^{i\theta},t\beta e^{-i\theta};q)_\infty}
{(te^{i\theta},te^{-i\theta};q)_\infty}=\sum_{n=0}^\infty 
\epsilon_n 
\frac{(\beta;q)_n}{(q;q)_n}
t^n 
\qhyp21{\beta,\beta q^{n}}{q^{n+1}}{t^2}
\!
T_n(x).
\label{4:28}
\end{equation}
\end{cor}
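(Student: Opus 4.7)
The plan is to obtain (\ref{4:28}) from Theorem \ref{theo:4}, specifically from (\ref{4:21}), by specializing $\gamma$ so that the continuous $q$-ultraspherical/Rogers polynomials collapse to Chebyshev polynomials of the first kind. I would set $\gamma = q^\alpha$ with $\alpha>0$ in (\ref{4:21}) and then send $\alpha\downarrow 0$. The limit relation quoted just before the corollary,
\[
\lim_{\alpha\to 0}\frac{(q^{\alpha+1};q)_n}{(q^\alpha;q)_n}C_n(x;q^\alpha|q)=\epsilon_n\,T_n(x),
\]
rearranges as $C_n(x;q^\alpha|q)/(q^\alpha;q)_n \to \epsilon_n T_n(x)/(q;q)_n$, which simultaneously supplies the Neumann factor $\epsilon_n$ and the $1/(q;q)_n$ normalization appearing on the right-hand side of (\ref{4:28}).

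Next, I would track the ${}_2\phi_1$ coefficient under the same substitution. It becomes
\[
\qhyp21{\beta q^{-\alpha},\beta q^n}{q^{n+1+\alpha}}{q,q^\alpha t^2},
\]
and as $\alpha\downarrow 0$ this converges, term by term in its defining series (\ref{2:11}), to $\qhyp21{\beta,\beta q^n}{q^{n+1}}{q,t^2}$, which is exactly the coefficient claimed in (\ref{4:28}). The factor $(\beta;q)_n$ and the power $t^n$ are independent of $\gamma$, and the left-hand side of (\ref{4:21}) does not involve $\gamma$ at all, so assembling these pieces immediately yields the identity at the formal level.

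The main obstacle, as usual, will be justifying the interchange of the limit $\alpha\downarrow 0$ with the outer sum on $n$. The strategy is to dominate the $n$th summand uniformly in $\alpha$ on a neighborhood of $0$: the $q$-Pochhammer inequalities (\ref{2:12})--(\ref{2:14}) bound the quotient $(\beta;q)_n/(q^\alpha;q)_n$ and the inner ${}_2\phi_1$ coefficient uniformly by a polynomial in $n$, the estimate (\ref{4:24}) already gives $|C_n(x;q^\alpha|q)|\le (n+1)^{\sigma_4}$ with $\sigma_4$ that can be chosen locally uniformly in $\alpha$, and $|t|<1$ supplies a geometric factor $|t|^n$. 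Together they produce a summable majorant of exactly the type used in the proof of Theorem \ref{theo:4}, so dominated convergence legitimizes termwise passage to the limit, and the corollary follows.
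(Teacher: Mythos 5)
Your proposal is correct and follows essentially the same route as the paper: the corollary is obtained from (\ref{4:21}) by setting $\gamma=q^{\alpha}$ and letting $\alpha\downarrow 0$, using the quoted limit $\lim_{\alpha\to 0}\tfrac{(q^{\alpha+1};q)_n}{(q^{\alpha};q)_n}C_n(x;q^{\alpha}|q)=\epsilon_n T_n(x)$ to produce the Neumann factor and the $1/(q;q)_n$ normalization, with the ${}_2\phi_1$ coefficient converging termwise to $\qhyp21{\beta,\beta q^{n}}{q^{n+1}}{q,t^2}$. Your added remarks on dominating the series uniformly in $\alpha$ to justify the termwise limit go slightly beyond the paper, which simply asserts that the result follows from the limit relation.
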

\noindent The above formula is a $q$-analogue of 
\cite[(3.10)]{CohlDominici}
\begin{equation}
\frac{1}{(z-x)^\nu}=
\sqrt{\frac{2}{\pi}}
\frac{
e^{i\pi(\frac12-\nu)}
}
{
\Gamma(\nu)
(z^2-1)^{\frac{\nu}{2} -\frac14}
}
\sum_{n=0}^\infty\epsilon_n Q_{n-\frac12}^{\nu
-\frac12}(z)T_n(x). \label{4:29}
\end{equation}
The above result (\ref{4:28})
can be specialized to find a $q$-analogue of Heine's 
reciprocal square root identity \cite[p. 286]{Heine}
\begin{equation}
\frac{1}{\sqrt{z-x}}=\frac{\sqrt{2}}{\pi}
\sum_{n=0}^\infty \epsilon_n Q_{n-1/2}(z) T_n(x),
\label{4:30}
\end{equation}
namely (\ref{4:28}) with $\beta=q^{1/2}$.
Note that the Legendre function of the second 
kind $Q_\nu^\mu:{\mathbb C}\setminus(-\infty,1]
\to{\mathbb C}$ can be defined in terms of the 
Gauss hypergeometric function
\[
Q_\nu^\mu(z):=\frac{\sqrt{\pi}\,e^{i\pi\mu}
\Gamma(\nu+\mu+1)(z^2-1)^{\mu/2}}{2^{\nu+1}
\Gamma(\nu+3/2)z^{\nu+\mu+1}}
\hyp21{\frac{\nu+\mu+1}{2},\frac{\nu+\mu+2}{2}}
{\nu+\frac 32}{\frac{1}{z^2}},
\]
for $|z|>1$ and $\nu+\mu+1\notin-{\mathbb N}_0$ 
(cf. Section 14.21 and (14.3.7) in 
\cite{NIST:DLMF}), and we have used the 
common convention $Q_\nu:=Q_\nu^0$.

Furthermore, (\ref{4:21}) produces the 
following result in terms of the continuous
$q$-Legendre polynomials defined in terms of 
the continuous 
$q$-ultraspherical/Rogers polynomials by 
\cite[p. 478]{Koekoeketal}
\[
P_n(x|q):=q^{n/4}C_n(x;q^{1/2}|q).
\]
\begin{cor}
Let $x\in[-1,1]$, $|t|<1$, $\beta,\gamma\in 
(-1,1)\setminus\{0\}$, $0<|q|<1$.
Then
\begin{equation}
\frac{(t\beta e^{i\theta},t\beta e^{-i\theta};q)_\infty}
{(te^{i\theta},te^{-i\theta};q)_\infty}=\sum_{n=0}^\infty 
\frac{(\beta;q)_n}{(q^{1/2};q)_n}
(tq^{-1/4})^n 
\qhyp21{\beta q^{-1/2},\beta q^{n}}{q^{n+3/2}}{q^{1/2}t^2}
\!P_n(x|q).
\label{genHeinesform}
\end{equation}
\end{cor}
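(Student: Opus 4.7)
The plan is to obtain (\ref{genHeinesform}) as a direct specialization of Theorem \ref{theo:4}, followed by a change of basis from the continuous $q$-ultraspherical/Rogers polynomials to the continuous $q$-Legendre polynomials. Concretely, I would set $\gamma = q^{1/2}$ in (\ref{4:21}): since $0<|q|<1$, the value $\gamma=q^{1/2}$ lies in $(-1,1)\setminus\{0\}$, so the hypotheses of Theorem \ref{theo:4} are satisfied and the resulting series converges under the same conditions on $x$, $t$, and $\beta$.

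After this substitution, the prefactor $(\beta;q)_n/(\gamma;q)_n$ becomes $(\beta;q)_n/(q^{1/2};q)_n$, the ${}_2\phi_1$ numerator parameters become $\beta q^{-1/2}$ and $\beta q^n$, the single denominator parameter becomes $q^{1/2}\cdot q^{n+1}=q^{n+3/2}$, and the argument becomes $q^{1/2}t^2$. This gives
\[
\frac{(t\beta e^{i\theta},t\beta e^{-i\theta};q)_\infty}
{(te^{i\theta},te^{-i\theta};q)_\infty}
=\sum_{n=0}^\infty
\frac{(\beta;q)_n}{(q^{1/2};q)_n}\,
\qhyp21{\beta q^{-1/2},\beta q^n}{q^{n+3/2}}{q,q^{1/2}t^2}\,
C_n(x;q^{1/2}|q)\,t^n.
\]

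Next I invoke the definition $P_n(x|q):=q^{n/4}C_n(x;q^{1/2}|q)$ cited from \cite[p.~478]{Koekoeketal}, which rearranges to $C_n(x;q^{1/2}|q)=q^{-n/4}P_n(x|q)$. Absorbing the factor $q^{-n/4}$ into $t^n$ produces the monomial $(tq^{-1/4})^n$, and the rest of the expression is unchanged. Matching this to the right-hand side of (\ref{genHeinesform}) completes the argument.

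The proof is essentially bookkeeping, and the only conceptual point to watch is the convergence: the exponent of the argument of the ${}_2\phi_1$ is $q^{1/2}t^2$, whose modulus remains below one for $|t|<1$ and $0<|q|<1$, so the inner series still converges; and the absolute convergence of the outer series established inside the proof of Theorem \ref{theo:4} carries over verbatim since $\gamma=q^{1/2}$ is admissible. There is no genuine obstacle beyond verifying that these substitutions do not disturb the domain hypotheses of Theorem \ref{theo:4}.
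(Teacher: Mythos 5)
Your proposal is correct and is exactly the derivation the paper intends: specialize $\gamma=q^{1/2}$ in Theorem \ref{theo:4} and convert via $P_n(x|q)=q^{n/4}C_n(x;q^{1/2}|q)$, absorbing $q^{-n/4}$ into $t^n$. The parameter bookkeeping and the convergence remark both check out, so nothing further is needed.
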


\noindent 
The relevant limiting procedure is \cite[(14.10.49)]{Koekoeketal} 
\begin{equation}
\lim_{q\uparrow1^{-}} P_n(x|q)=P_n(x),
\label{limitingprocedLegenq}
\end{equation}
where $P_n$ is the Legendre polynomial defined by \cite[(9.8.62)]{Koekoeketal}
\[
P_n(x):=
\hyp21{-n,n+1}{1}{\frac{1-x}{2}}.
\]
Using (\ref{limitingprocedLegenq}), one can see
that (\ref{genHeinesform}) is a $q$-analogue of \cite[(14)]{CohlGenGegen}, namely
\begin{equation}
\frac{1}{(z-x)^\nu}=
\frac{e^{i\pi(1-\nu)}(z^2-1)^{(1-\nu)/2}}{\Gamma(\nu)}
\sum_{n=0}^\infty (2n+1) Q_n^{\nu-1}(z) P_n(x),
\label{genGegLeg}
\end{equation}
which is a generalization of Heine's formula \cite{Heine1878} 
\begin{equation}
\frac{1}{z-x}=\sum_{n=0}^\infty (2n+1) Q_n(z) P_n(x).
\label{Heinesformula}
\end{equation}
The $q$-analogue of Heine's formula is
(\ref{genHeinesform}) with $\beta=q$.

\medskip
The above analysis is summarized as a hierarchical scheme 
in Figures \ref{figure1} and \ref{figure2}.

\begin{sidewaysfigure}
\caption{
A heirarchy of generalized generating functions which 
connects expansions of classical and $q$-hypergeometric 
orthogonal polynomials for the continuous $q$-Legendre, 
Legendre, continuous $q$-ultraspherical/Rogers, Gegenbauer, 
Chebyshev of the first kind, and continuous $q$-Hermite 
polynomials.
}
\begin{tikzpicture}[level distance=.05cm,sibling distance=.04cm,scale=0.578,every node/.style={scale=0.62}
,inner sep=11pt]

\node (c) at (12.725,-5.71) {
\fbox{
$\begin{array}{cc}
{\displaystyle \,}\\[-10pt]
{\displaystyle \hspace{-0.1cm}\frac{1}{(z-x)^\nu}=
\frac{(z^2-1)^{\frac{\nu-1}{2}}
}{\Gamma(\nu)
e^{i\pi(\nu-1)}}}
{\displaystyle  \sum_{n=0}^\infty (2n+1) Q_n^{\nu-1}(z) 
P_n(x)\!\!\!\!\!\!} \\[14pt]  \hspace{-0cm}\text{
(\ref{genGegLeg}) : (13) in Cohl (2013) 
\cite{CohlGenGegen}}
\\[6pt]
{\displaystyle \frac{(t\beta e^{i\theta},t\beta 
e^{-i\theta};q)_\infty}{(te^{i\theta},te^{-i\theta};
q)_\infty}=\sum_{n=0}^\infty \frac{(\beta;q)_n}
{(q^\frac12;q)_n}(tq^{-\frac14})^n \qhyp21{\beta 
q^{-\frac12},\beta q^{n}}{q^{n+\frac32}}{q^\frac 
12t^2}\!P_n(x|q)}\\[18pt]
\text{
(\ref{genHeinesform}) :
$q$-analogue (continuous $q$-Legendre polynomials)}
\end{array}$
}};

\node (a) at (22.1,-12.7) {
{\setlength{\fboxrule}{.055cm}
\fbox{
$\begin{array}{cc}
{\displaystyle \,}\\[-10pt]
{\displaystyle \frac{1}{z-x}=\sum_{n=0}^\infty (2n+1)
Q_n(z) P_n(x)}\\[15pt]\text{
(\ref{Heinesformula}) :
Heine (1878) \cite{Heine1878} Heine's formula 
}\\[10pt]
{\displaystyle \,}\\[-17pt]
{\displaystyle \frac{(tqe^{i\theta},tqe^{-i\theta};
q)_\infty}{(te^{i\theta},te^{-i\theta};q)_\infty}
=\sum_{n=0}^\infty \frac{(q;q)_n}{(q^\frac12;q)_n}
(tq^{-\frac14})^n 
\qhyp21{q^\frac12,q^{n+1}}{q^{n+\frac32}}{q^\frac12t^2}
\!P_n(x|q)}\\[15pt]
\text{
(\ref{genHeinesform}) with $\beta=q$ :
$q$-analogue (continuous $q$-Legendre polynomial)}
\end{array}$
}
}
}; 

\node (ap) at (21.9,7.00) {
\fbox{
$\begin{array}{cc}
{\displaystyle \,}\\[-10pt]
{\displaystyle \frac{1}{z-x}=\frac{2^{\mu+\frac12}
\Gamma(\mu)e^{i\pi(\mu-\frac12)}}{\sqrt{\pi}\,(z^2-1)^{-\frac{\mu}{2}+\frac14}}\sum_{n=0}^\infty 
(n+\mu) Q_{n+\mu-\frac12}^{-\mu+\frac12}(z) C_n^\mu(x)}
\\[12pt]\text{(7.2) in Durand 
et al. (1976) \cite{DurandFishSim}}
{\displaystyle \,}\\[10pt]
{\displaystyle \frac{(tqe^{i\theta},tqe^{-i
\theta};q)_\infty}{(te^{i\theta},te^{-i\theta};
q)_\infty}=\sum_{n=0}^\infty\frac{(q;q)_n}
{(\gamma;q)_n}t^n\qhyp21{q\gamma^{-1},q^{n+1}}
{\gamma q^{n+1}}{\gamma t^2}\!C_n(x;\gamma|q)}\\[15pt]
\text{
(\ref{4:21}) with $\beta=q$ : $q$-analogue 
(continuous $q$-ultraspherical/Rogers polynomials)}
\end{array}$
}
};


\node (ctshg) at (15.145,-1.20) {
{\setlength{\fboxrule}{.056cm}
\fbox{
$\begin{array}{cc}
{\displaystyle \,}\\[-10pt]
{\displaystyle \frac{1}{(te^{i\theta},te^{-i\theta}
;q)_\infty}=\sum_{n=0}^\infty \frac{t^n}{(q;q)_n}
H_n(x|q)}\\[15pt]
\text{
(\ref{4:27}) :
generating function for continuous $q$-Hermite polynomials}
&\end{array}$
}
}
};

\node (ctsh) at (11.645,2.50) {
\fbox{
$\begin{array}{cc}
{\displaystyle \,}\\[-10pt]
{\displaystyle \frac{(t\beta e^{i\theta},t\beta e^{-i\theta};q)_\infty}
{(te^{i\theta},te^{-i\theta};q)_\infty}=\sum_{n=0}^\infty 
\frac{(\beta;q)_n}{(q;q)_n}t^n
\qhyp11{\beta q^{n}}{0}{\beta t^2}
\!H_n(x|q)
}\\[15pt]
\text{
(\ref{4:26}) :
$q$-expansion (continuous $q$-Hermite polynomials)}
&\end{array}$
}
};


\node (e) at (-3.935,-12.7) {
{\setlength{\fboxrule}{.055cm}
\fbox{
$\begin{array}{cc}
{\displaystyle \,}\\[-10pt]
{\displaystyle \frac{1}{\sqrt{z-x}}=\frac{\sqrt{2}}{\pi}\sum_{n=0}^\infty \epsilon_n
Q_{n-\frac12}(z) T_n(x)}
\\[15pt]\text{
(\ref{4:30}) : Heine (1881) \cite{Heine} reciprocal square root identity (1881)}\\[10pt] {\displaystyle \frac{(tq^\frac12e^{i\theta},tq^\frac12e^{-i\theta};q)_\infty}
{(te^{i\theta},te^{-i\theta};q)_\infty}=\sum_{n=0}^\infty 
\epsilon_n
\frac{(q^\frac12;q)_n}{(q;q)_n}
t^n 
\qhyp21{q^\frac12,q^{n+\frac12}}{q^{n+1}}{t^2}
\!T_n(x)
}\\[15pt]
\text{
(\ref{4:28}) with $\beta=q^{\frac12}$ : 
$q$-analogue (Chebyshev polynomials of the first kind)}
\end{array}$
}
}
};

\node (ep) at (-4.355,-6.71) {
\fbox{
$\begin{array}{cc}
{\displaystyle \,}\\[-10pt]
{\displaystyle 
\frac{1}{(z-x)^\nu}=
\sqrt{\frac{2}{\pi}}\frac{(z^2-1)^{-\frac{\nu}{2}+\frac14}}{e^{i\pi(\nu-\frac12)}\Gamma(\nu)}
\sum_{n=0}^\infty\epsilon_n 
Q_{n-\frac12}^{\nu-\frac12}(z)T_n(x)}\\[13pt]
\text{(\ref{4:29}) : 
(3.10) in Cohl \& Dominici (2011) \cite{CohlDominici}}\\[4pt]
{\displaystyle \,}\\[-10pt]
{\displaystyle \frac{(t\beta e^{i\theta},t\beta e^{-i\theta};q)_\infty}
{(te^{i\theta},te^{-i\theta};q)_\infty}=\sum_{n=0}^\infty 
\epsilon_n 
\frac{(\beta;q)_n}{(q;q)_n}
t^n 
\qhyp21{\beta,\beta q^{n}}{q^{n+1}}{t^2}
\!
T_n(x)
}\\[17pt]
\text{
(\ref{4:28}) :
$q$-analogue (Chebyshev polynomial of the first kind)}
\end{array}$
}
};

\node (b) at (8.645,-12.7) {
{\setlength{\fboxrule}{.055cm}
\fbox{
$\begin{array}{cc}
{\displaystyle \,}\\[-10pt]
{\displaystyle \frac{1}{(1+t^2-2t x)^\nu}=\sum_{n=0}^\infty t^n C_n^\nu(x)}
\\[15pt]\text{
(\ref{4:23}) : 
Gegenbauer (1874) \cite{Gegenbauer1874} generating function}\\[5pt]
{\displaystyle \,}\\[-10pt]
{\displaystyle
\frac{(t\beta e^{i\theta},t\beta e^{-i\theta};q)_\infty}
{(t e^{i\theta},t e^{-i\theta};q)_\infty}
=\sum_{n=0}^\infty t^n C_n(x;\beta|q) 
}
\\[15pt]\text{ (\ref{4:22}) :
Rogers (1893) \cite{rogers} generating function 
}\\[4pt]\end{array}$
}
}
};

\node (bp) at (-4.235,7.00) {
\fbox{
$\begin{array}{cc}
{\displaystyle \,}\\[-10pt]
{\displaystyle \frac{1}{(z-x)^\nu}=
\frac{2^{\mu+\frac12}\Gamma(\mu)
e^{i\pi(\mu-\nu+\frac12)}}
{\sqrt{\pi}\,\Gamma(\nu)(z^2-1)^{\frac
{\nu-\mu}{2}-\frac14}}}
{\displaystyle 
\sum_{n=0}^\infty (n+\mu) Q_{n+\mu
-\frac12}^{\nu-\mu-\frac12}(z)C_n^\mu(x)}
\\[15pt]\text{
(\ref{4:25}) :
Theorem 2.1 in Cohl (2013) \cite{CohlGenGegen}
}\\[10pt]
{\displaystyle
\frac{(t\beta e^{i\theta},t\beta e^{-i\theta};q)_\infty}
{(t e^{i\theta},t e^{-i\theta};q)_\infty}
=\sum_{n=0}^\infty
\frac{(\beta;q)_n}{(\gamma;q)_n}
t^n
\,{}_2\phi_1\left(
\begin{array}{c}
\beta\gamma^{-1},\beta q^n\\ \gamma q^{n+1}
\end{array} ;q,\gamma t^2 \right)
\!C_n(x;\gamma|q)
}
\\[18pt]\text{ 
(\ref{4:21}) :
$q$-analogue (continuous $q$-ultraspherical/Rogers polynomials)
}
\end{array}$
}
};

\draw[<-,line width=1.5pt] (ctshg) -- (ctsh);  
\draw[<-,line width=1.5pt] (ctsh) -- (bp);
\draw[<-,line width=1.5pt] (b) -- (bp);
\draw[<-,line width=1.5pt] (a) -- (c);
\draw[<-,line width=1.5pt] (a) -- (ap); 
\draw[<-,line width=1.5pt] (c) -- (bp);  
\draw[<-,line width=1.5pt] (ap) -- (bp);
\draw[<-,line width=1.5pt] (e) -- (ep);   
\draw[<-,line width=1.5pt] (ep) -- (bp);

\end{tikzpicture}
\label{figure1}
\end{sidewaysfigure}
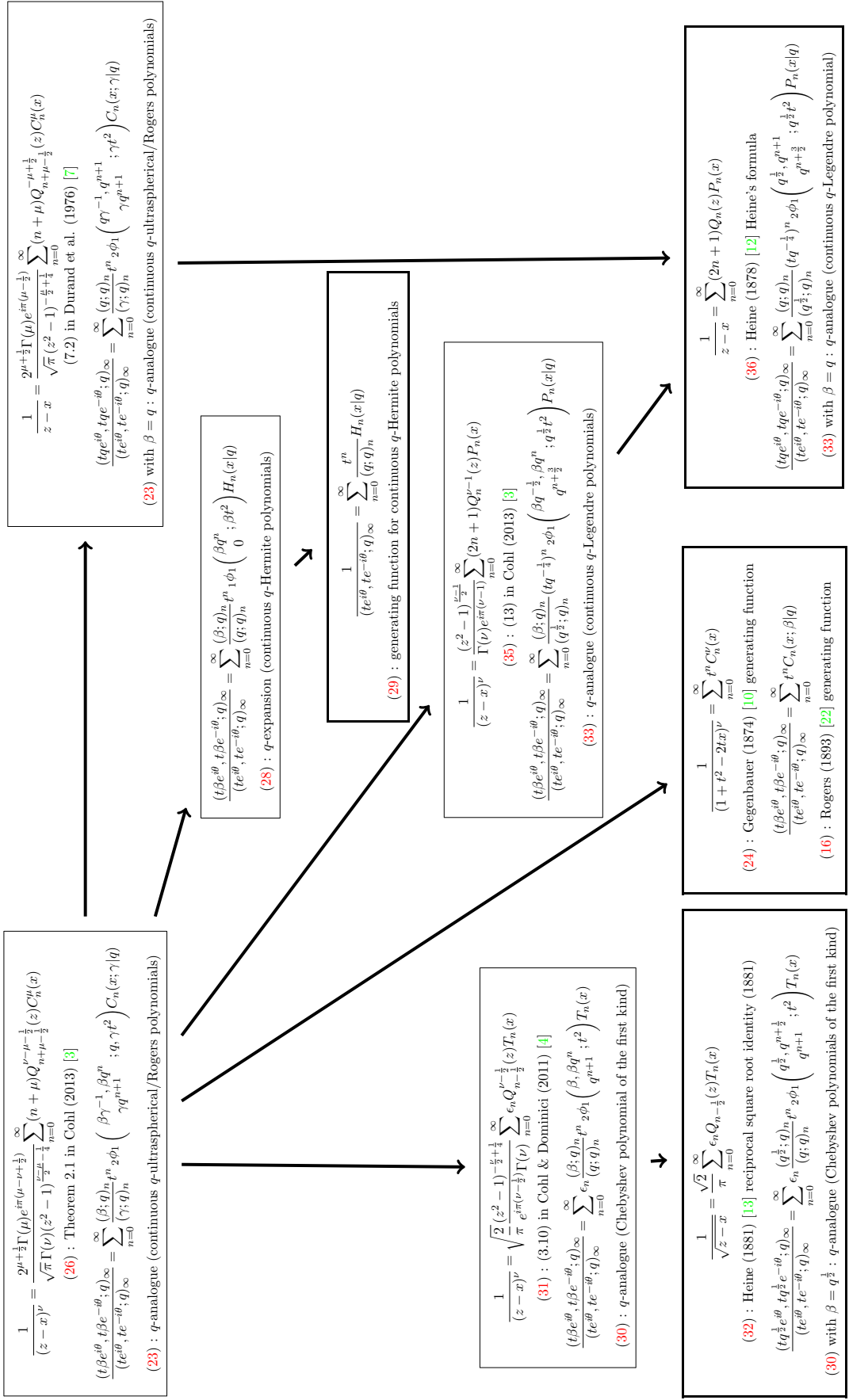

\begin{figure}[!htb]
\caption{A heirarchy of generalized generating functions 
for the continuous $q$-ultraspherical/Rogers polynomials 
which connects expansions of classical and 
$q$-hypergeometric orthogonal polynomials for the 
continuous $q$-ultraspherical/Rogers, Gegenbauer, 
continuous $q$-Jacobi, Jacobi, and Askey-Wilson polynomials.}
\begin{tikzpicture}[level distance=.05cm,sibling distance=.04cm,scale=0.62,every node/.style={scale=0.62},inner sep=11pt]

\node (bp) at (10.70,-34.80) {
{\setlength{\fboxrule}{.055cm}
\fbox{
$\begin{array}{cc}
{\displaystyle \,}\\[-10pt]
{\displaystyle \frac{1}{(z-x)^\nu}=
\frac{2^{\mu+\frac12}\Gamma(\mu)e^{i\pi(\mu-\nu+\frac12)}}
{\sqrt{\pi}\,\Gamma(\nu)(z^2-1)^{\frac{\nu-\mu}{2}-\frac14}}}
{\displaystyle 
\sum_{n=0}^\infty
(n+\mu)
Q_{n+\mu-\frac12}^{\nu-\mu-\frac12}(z)C_n^\mu(x)}
\\[15pt]\text{
(\ref{4:25}) :
Theorem 2.1 in Cohl (2013) \cite{CohlGenGegen}
}\\[10pt]
{\displaystyle
\frac{(t\beta e^{i\theta},t\beta e^{-i\theta};q)_\infty}
{(t e^{i\theta},t e^{-i\theta};q)_\infty}
=\sum_{n=0}^\infty
\frac{(\beta;q)_n}{(\gamma;q)_n}
t^n
\,{}_2\phi_1\left(
\begin{array}{c}
\beta\gamma^{-1},\beta q^n\\
\gamma q^{n+1}
\end{array}
;q,\gamma t^2
\right)
\!C_n(x;\gamma|q)
}
\\[18pt]\text{ (\ref{4:21}) :
$q$-analogue (continuous $q$-ultraspherical/Rogers polynomials)
}
\end{array}$
}
}
};

\node (app) at (0.24,-29.40) {
\fbox{
$\begin{array}{cc}
{\displaystyle \,}\\[-10pt]
{\displaystyle \frac{1}{z-x}=
\frac{(z-1)^\alpha(z+1)^\beta}{2^{\alpha+\beta}}}
{\displaystyle\sum_{n=0}^\infty 
\frac{(2n+\alpha+\beta+1)\Gamma(\alpha+\beta+n+1)n!}
{\Gamma(\alpha+1+n)\Gamma(\beta+1+n)}
Q_n^{(\alpha,\beta)}(z) 
P_n^{(\alpha,\beta)}(x)}\hspace{-0.3cm}\\[15pt]
\hspace{-0cm}\text{(9.2.1) in Szeg\H{o} (1959)
\cite{Szego}
}\\[10pt]
{\displaystyle 
\frac{(t\beta e^{i\theta},t\beta e^{-i\theta};q)_\infty}
{(t e^{i\theta},t e^{-i\theta};q)_\infty}
=
\sum_{n=0}^\infty
d_n(q,t,\alpha,\gamma,q)
P_n^{(\alpha,\gamma)}(x|q)
}
\\[18pt]
\text{
(\ref{3:18}) with 
$\beta=q$ : $q$-analogue (continuous $q$-Jacobi polynomials)
}
&\end{array}$
}
};

\node (appp) at (6.40,-20.20) {
\fbox{
$\begin{array}{cc}
{\displaystyle \,}\\[-10pt]
{\displaystyle \frac{1}{(z-x)^\nu}=
\frac{(z-1)^{\alpha+1-\nu}(z+1)^{\beta+1-\nu}}
{2^{\alpha+\beta+1-\nu}}}
{\displaystyle \sum_{n=0}^\infty 
\frac{(2n+\alpha+\beta+1)\Gamma(\alpha+\beta+n+1)(\nu)_n}
{\Gamma(\alpha+n+1)\Gamma(\beta+n+1)}
Q_{n+\nu-1}^{(\alpha+1-\nu,\beta+1-\nu)}(z) 
P_n^{(\alpha,\beta)}(x)}\\[15pt]
\hspace{-0cm}\text{
Theorem 1 in Cohl (2013) \cite{Cohl12pow}
}\\[9pt]
{\displaystyle 
\frac{(t\beta e^{i\theta},t\beta e^{-i\theta};q)_\infty}
{(t e^{i\theta},t e^{-i\theta};q)_\infty}
=
\sum_{n=0}^\infty
d_n(\beta,t,\alpha,\gamma,q)
P_n^{(\alpha,\gamma)}(x|q)
}\\[20pt]
{\displaystyle 
d_n(\beta,t,\alpha,\gamma,q):=\frac
{(t\alpha^{-\frac12})^n(\beta,
-(\alpha\gamma)^\frac12,-(q\alpha\gamma)^\frac12;q)_n
(q^n\alpha^\frac12\beta t,-q^n\gamma^\frac12\beta t,-q^{n+\frac12}\gamma^\frac12\beta t,
q^{n+\frac12}\alpha^\frac12\gamma t;q)_\infty
}
{
(q^n\alpha\gamma;q)_n
(\alpha^\frac12t,-\gamma^\frac12t,-(q\gamma)^\frac12t,
q^{2n+\frac12}\alpha^\frac12\gamma\beta t;q)_\infty}
}\\[20pt]
{\displaystyle
\times
\qhyp87{
q^{2n-\frac12}\alpha^\frac12\gamma\beta t,
\pm q^{n+\frac34}\alpha^\frac14(\gamma\beta t)^\frac12,
-q^n(\alpha\gamma)^\frac12,
-q^{n+\frac12}(\alpha\gamma)^\frac12,
q^{n+\frac12}\gamma,
(q\alpha)^{-\frac12}\beta t,
q^n\beta
}{
\pm q^{n-\frac14}\alpha^\frac14(\gamma\beta t)^\frac12,
q^n\alpha^\frac12\beta t,
-q^n\gamma^\frac12\beta t,
-q^{n+\frac12}\gamma^\frac12\beta t,
q^{2n+1}\alpha\gamma,
q^{n+\frac12}\alpha^\frac12\gamma t
}{q,(q\alpha)^\frac12t}
}
\\[18pt]
\text{
(\ref{3:18}) : $q$-analogue 
(continuous $q$-Jacobi polynomials)
}
&\end{array}
$ 
}
};

\node (apppp) at (6.00,-8.50) {
\fbox{
$\begin{array}{cc}
{\displaystyle \,}&\\[-10pt]
{\displaystyle 
\frac{\Gamma(t+ix)\Gamma(t-ix)}{\Gamma(t\beta+ix)\Gamma(t\beta-ix)}
=\sum_{n=0}^\infty 
\frac{(t(\beta-1))_n
(a_1+a_2+a_3+n+t)_{n+t(\beta-1)}
W_n(x^2;a_1,a_2,a_3,a_4) 
}
{n!(n-1+a_1+a_2+a_3+a_4)_n
(a_1+t,a_2+t,a_3+t)_{n+t(\beta-1)}}
}&\\[20pt]
{\displaystyle 
\times {}_7F_6\left(\begin{array}{c} 
2n-1+a_1+a_2+a_3+t\beta, \frac{2n+1+a_1+a_2+a_3+t\beta}{2}, n+a_1+a_2, 
n+a_1+a_3, n+a_2+a_3, t\beta-a_4, n+t(\beta-1) \\[1pt]
\frac{2n-1+a_1+a_2+a_3+t\beta}{2}, n+a_1+t\beta, n+a_2+t\beta,
n+a_3+t\beta, 2n+a_1+a_2+a_3+a_4, n+a_1+a_2+a_3+t \nonumber
\end{array};1\right) 
}&\\[19pt]
\text{
(\ref{Wilsonlimit}) : Wilson limit of Ismail-Simeonov generalized generating function}&\\[10pt]
{\displaystyle 
\frac{(t\beta e^{i\theta},t\beta e^{-i\theta};q)_\infty}
{(t e^{i\theta},t e^{-i\theta};q)_\infty}
=
\sum_{n=0}^\infty
c_n(\beta,t,a_1,a_2,a_3,a_4,q)
p_n(x;a_1,a_2,a_3,a_4|q)
}\\[20pt]

{\displaystyle 
c_n(\beta,t,a_1,a_2,a_3,a_4,q):=
\frac
{t^n(\beta;q)_n(q^na_1u,q^na_2u,q^na_3u,tq^na_1a_2a_3;q)_\infty}
{(q,q^{n-1}a_1a_2a_3a_4;q)_n(ta_1,ta_2,ta_3,q^{2n}a_1a_2a_3\beta t;q)_\infty}}\\[20pt]

{\displaystyle
\times
{\displaystyle \qhyp87
{q^{2n-1}a_1a_2a_3\beta t,
\pm q^{n+\frac12}(a_1a_2a_3\beta t)^\frac12,
q^na_1a_2,q^na_1a_3,
q^na_2a_3,\beta t/a_4,q^n\beta}
{
\pm q^{n-\frac12}(a_1a_2a_3\beta t)^\frac12,
q^na_1\beta t,q^na_2\beta t,q^na_3\beta t,q^{2n}a_1a_2a_3a_4,
tq^na_1a_2a_3}{q,ta_4}}

}\\[19pt]
\text{
(\ref{3:15}) : expansion of Rogers 
generating function in Askey-Wilson polynomials
}
&\end{array}
$ 
}
};

\draw[<-,line width=1.5pt] (bp) -- (appp);
\draw[<-,line width=1.5pt] (app) -- (appp);
\draw[<-,line width=1.5pt] (appp) -- (apppp);

\end{tikzpicture}
\label{figure2}
\end{figure}

\subsection{A quadratic transformation for basic hypergeometric functions}

In (\ref{3:15}), let 
$a_1\mapsto \gamma^\frac12$, 
$a_2\mapsto -\gamma^\frac12$, 
$a_3\mapsto -(q\gamma)^\frac12$, 
$a_4\mapsto (q\gamma)^\frac12$, 
and specializing the Askey-Wilson polynomials
to continuous $q$-ultraspherical/Rogers polynomials using
\cite[p. 472]{Koekoeketal}
\[
C_n(x;\gamma|q)=\frac{(\gamma^2;q)_n}{\left(q,-\gamma, 
\pm{q}^\frac12 \gamma;q\right)_n}
p_n\left(x;\gamma^\frac12,
-\gamma^\frac12,
-(q\gamma)^\frac12,
(q\gamma)^\frac12|q\right),
\]
produces an expansion of the Rogers generating function whose coefficients are an 
${}_8\phi_7$.  By comparing the coefficients of this expansion with the 
generalized Rogers generating function (\ref{4:21}), and further replacing 
$(\beta,\gamma)\mapsto (q^{-n}\beta,q^{-n}\gamma)$, $t\mapsto (q/\gamma)^\frac12t$, 
we derive
{\small \begin{eqnarray}
&&\qhyp21{\beta/\gamma,\beta}{q\gamma}{q,qt^2}=
\frac{(q(\beta t)^2,q\gamma t,qt;q)_\infty}
{(q\beta\gamma t,q\beta t,qt^2;q)_\infty}
\qhyp87
{\beta\gamma t,\pm(q^2\beta\gamma t)^\frac12,\pm q^\frac12\gamma,
\beta\gamma^{-1}t,-\gamma,\beta}
{
\pm (\beta\gamma t)^\frac12,\pm q^{\frac12}\beta t,q\gamma^2,-q\beta t,q\gamma t
}{q,qt}.
\label{getCor44IsmailSim}
\end{eqnarray}}
This is a generalization of \cite[Corollary 4.4]{IsmailSimeonov2015} with $\beta=\gamma$.
By re-expressing (\ref{getCor44IsmailSim}), we see that our procedure has produced 
a new {\it quadratic transformation}
for basic hypergeometric functions (see \cite{RahmanVerma93}).
\begin{thm}
Let $t\in(-1,1)$, $0<|q|<1$. Then
{\small \begin{eqnarray*}
&&\qhyp21{a,b}{qab^{-1}}{q,qt^2}
=\frac
{\left(q(at)^2,qab^{-1}t,qt;q\right)_\infty}
{\left(qa^2b^{-1}t,qat,qt^2;q\right)_\infty}
\qhyp87{a^2b^{-1}t,\pm qab^{-\frac12}
t^\frac12,\pm q^{\frac12}ab^{-1},-ab^{-1},
bt,a
}
{\pm ab^{-\frac12}t^\frac12,\pm q^{\frac12}at,qa^2b^{-2},qab^{-1}t,-qat}
{q,qt},
\end{eqnarray*}}
which is valid under the transformation $t\mapsto -t$.
\end{thm}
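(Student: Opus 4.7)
The plan is to derive the transformation by specializing the Ismail--Simeonov expansion (\ref{3:15}) to the continuous $q$-ultraspherical/Rogers case, and then matching coefficients against Theorem \ref{theo:4}. First, I would substitute $a_1,a_2,a_3,a_4 \mapsto \gamma^{\frac12},-\gamma^{\frac12},-(q\gamma)^{\frac12},(q\gamma)^{\frac12}$ into (\ref{3:15}). Under this choice, the Askey--Wilson polynomial $p_n(x;{\bf a}|q)$ collapses, via the reduction formula recorded in the paragraph preceding (\ref{getCor44IsmailSim}), to a Pochhammer multiple of the Rogers polynomial $C_n(x;\gamma|q)$; simultaneously the coefficient $c_n(\beta,t,{\bf a};q)$ simplifies dramatically because many symmetric products such as $a_1a_2=-\gamma$, $a_1a_3a_2a_4=q\gamma^3$, etc., cancel against the $(\gamma^2;q)_n$ produced by the polynomial reduction. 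Using identities (\ref{2:3})--(\ref{2:5}) to consolidate the resulting Pochhammer products yields an expansion of the Rogers generating function in $C_n(x;\gamma|q)$ whose coefficients are an infinite-product prefactor times an ${}_8\phi_7$.

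Second, Theorem \ref{theo:4} gives an independent expansion of the same Rogers generating function whose coefficients are $\tfrac{(\beta;q)_n}{(\gamma;q)_n}\,{}_2\phi_1(\beta\gamma^{-1},\beta q^n;\gamma q^{n+1};q,\gamma t^2)$. Since the continuous $q$-ultraspherical/Rogers polynomials $\{C_n(\,\cdot\,;\gamma|q)\}_{n\ge 0}$ are linearly independent, I equate the coefficient of $C_n(x;\gamma|q)\,t^n$ on both sides. To remove the residual $n$-dependence from the prefactor it is necessary to apply the joint shift $(\beta,\gamma)\mapsto(q^{-n}\beta,q^{-n}\gamma)$ together with the rescaling $t\mapsto(q/\gamma)^{\frac12}t$; the $q$-Pochhammer shift rule (\ref{2:3}) makes the $n$-dependent parts telescope, leaving precisely the infinite-product ratio $(q(\beta t)^2,q\gamma t,qt;q)_\infty/(q\beta\gamma t,q\beta t,qt^2;q)_\infty$. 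This produces the intermediate identity (\ref{getCor44IsmailSim}).

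Finally, I would relabel $a:=\beta$, $b:=\beta/\gamma$, so that $ab^{-1}=\gamma$, $ab^{-\frac12}=(\beta\gamma)^{\frac12}$, $a^2b^{-1}=\beta\gamma$, and $a^2b^{-2}=\gamma^2$; a line-by-line verification shows that every parameter and the argument in both the ${}_2\phi_1$ and the ${}_8\phi_7$ match those in the theorem statement. The symmetry $t\mapsto -t$ is inherited from the even structure of the Rogers generating function in $t$ together with the $\pm$-balanced appearance of $t$ in the ${}_8\phi_7$ parameter list. The main obstacle is purely bookkeeping: carefully tracking the numerous Pochhammer cancellations in the $c_n$ coefficient once the specialization and the shift $(\beta,\gamma)\mapsto(q^{-n}\beta,q^{-n}\gamma)$ are combined, and confirming that the square roots $(q^2\beta\gamma t)^{\frac12}$ distribute cleanly as $q(\beta\gamma t)^{\frac12}$ so that both $\pm$-pairs and the numerator/denominator pair $\pm(\beta\gamma t)^{\frac12}$ in the ${}_8\phi_7$ emerge with the right signs. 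Once this simplification is carried out, the relabeling step is cosmetic.
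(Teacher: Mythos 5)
Your proposal follows exactly the route the paper takes: specialize the Ismail--Simeonov expansion (\ref{3:15}) at $a_1,a_2,a_3,a_4=\gamma^{\frac12},-\gamma^{\frac12},-(q\gamma)^{\frac12},(q\gamma)^{\frac12}$, reduce the Askey--Wilson polynomials to $C_n(x;\gamma|q)$, match coefficients against Theorem \ref{theo:4}, apply the shift $(\beta,\gamma)\mapsto(q^{-n}\beta,q^{-n}\gamma)$, $t\mapsto(q/\gamma)^{\frac12}t$ to obtain (\ref{getCor44IsmailSim}), and relabel $a=\beta$, $b=\beta/\gamma$. The only quibble is the incidental slip $a_1a_2a_3a_4=q\gamma^{2}$ (not $q\gamma^{3}$), which does not affect the argument.
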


This quadratic transformation has some interesting consequences. 
For $a=0$ one obtains the $q$-binomial theorem (\ref{2:10}).
For $t\in\mathbb C$, $t= iqb^{-1/2}$, the ${}_2\phi_1$ can be summed 
by the $q$-Kummer (Bailey-Daum) summation
\cite[(II.9)]{GaspRah} (this leads to a very unusual summation 
of the ${}_8\phi_7$).
It corresponds in the $q\uparrow 1^{-}$ limit to 
the quadratic transformation for the Gauss hypergeometric function
\cite[(15.8.21), (15.8.1)]{NIST:DLMF} 
\begin{equation}
\hyp21{a,b}{a-b+1}{t^2}=\frac{1}{(1\pm t)^{2a}}
\hyp21{a,a-b+\frac12}{2a-2b+1}{\frac{\pm 4t}{(1\pm t)^2}}.
\label{quadGausshypergeom}
\end{equation}
Note that \cite[(4.1)]{RahmanVerma93} is a quadratic transformation of basic 
hypergeometric series which in the limit $q\uparrow 1^{-}$ yields (\ref{quadGausshypergeom}). 
However, this quadratic transformation seems altogether different.

\subsection{Expansions of $(1-x)^{-\nu}$}

From the Jacobi expansion of $(z-x)^{-\nu}$ (\ref{3:19}), we can derive an expansion of
$(1-x)^{-\nu}$ by using the limit as $z\to 1^{+}$. Also, this is the corresponding limit
of the Wilson polynomial expansion (\ref{Wilsonlimit}) to the Jacobi polynomials. 
In this subsection we derive this and other limiting expansions, which generalize 
\cite[(18.18.15)]{NIST:DLMF} for $\nu=-n$, $n\in\N_0$.

\begin{cor} Let $x\in(-1,1)$, $\nu\in\C$, $\alpha,\beta\in\C$ 
such that $\Re(\alpha-\nu+1)>0$. Then
\begin{equation}
\frac{1}{(1-x)^\nu}=\frac{\Gamma(\alpha-\nu+1)}{2^\nu}
\sum_{n=0}^\infty
\frac{(\alpha+\beta+2n+1)\Gamma(\alpha+\beta+1+n)(\nu)_n}
{\Gamma(\alpha+1+n)\Gamma(\alpha+\beta+2-\nu+n)}
P_n^{(\alpha,\beta)}(x).
\label{Jacobi1mnux}
\end{equation}
\end{cor}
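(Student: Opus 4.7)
The plan is to obtain (\ref{Jacobi1mnux}) by taking the limit $z\to 1^{+}$ in the Jacobi polynomial expansion (\ref{3:19}) for $(z-x)^{-\nu}$, which is stated to hold for the same parameters but on the cut plane. The left-hand side of (\ref{3:19}) tends to $(1-x)^{-\nu}$ immediately, so the entire content of the proof lies in computing the term-by-term limit of the coefficients on the right. Since $(z+1)^{\beta+1-\nu}/2^{\alpha+\beta+1-\nu}\to 2^{\beta+1-\nu-\alpha-\beta-1+\nu}=2^{-\alpha}$, the nontrivial factor to analyze is
\[
(z-1)^{\alpha+1-\nu}\,Q_{n+\nu-1}^{(\alpha+1-\nu,\beta+1-\nu)}(z),\qquad z\to 1^{+}.
\]

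The second step is to invoke the classical boundary behavior of the Jacobi function of the second kind, namely
\[
Q_{\sigma}^{(a,b)}(z)\;\sim\;\frac{\Gamma(a)\,\Gamma(\sigma+b+1)}{2\,\Gamma(\sigma+a+b+1)}\left(\frac{z-1}{2}\right)^{-a},\qquad z\to 1^{+},
\]
valid when $\Re a>0$ (cf.\ Szeg\H{o} \cite{Szego}). Specializing to $a=\alpha+1-\nu$, $b=\beta+1-\nu$, $\sigma=n+\nu-1$, the hypothesis $\Re(\alpha-\nu+1)>0$ is precisely what makes this leading singularity the dominant one. Multiplying by $(z-1)^{\alpha+1-\nu}$ therefore kills the singular factor $\bigl((z-1)/2\bigr)^{-(\alpha+1-\nu)}$ and yields the finite limit
\[
\lim_{z\to 1^{+}}(z-1)^{\alpha+1-\nu}Q_{n+\nu-1}^{(\alpha+1-\nu,\beta+1-\nu)}(z)=2^{\alpha-\nu}\,\frac{\Gamma(\alpha+1-\nu)\,\Gamma(n+\beta+1)}{\Gamma(n+\alpha+\beta+2-\nu)}.
\]

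Combining this with the $2^{-\alpha}$ factor above collapses the coefficient of $P_n^{(\alpha,\beta)}(x)$ to $2^{-\nu}\Gamma(\alpha+1-\nu)\Gamma(n+\beta+1)/\Gamma(n+\alpha+\beta+2-\nu)$, at which point the $\Gamma(n+\beta+1)$ factor cancels the one in the denominator of (\ref{3:19}) and the remaining gamma ratio reproduces exactly the right-hand side of (\ref{Jacobi1mnux}).

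The main technical obstacle is not the algebra but the justification for passing the limit under the infinite sum. I would handle this by a Tannery/dominated-convergence argument: on a small interval $z\in(1,1+\epsilon]$, one bounds $(z-1)^{\alpha+1-\nu}Q_{n+\nu-1}^{(\alpha+1-\nu,\beta+1-\nu)}(z)$ uniformly in $z$ by a polynomial expression in $n$ (obtainable from the $ {}_2F_1$ representation of $Q^{(a,b)}_\sigma$ in the variable $2/(1-z)$, which converges uniformly once $z$ is bounded away from $-1$), combines it with the standard polynomial bounds on $P_n^{(\alpha,\beta)}(x)$ for $x\in(-1,1)$, and uses the fact that the resulting majorant series already converges (as one may verify by comparison, or directly from the convergence at any interior $z>1$). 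Once the interchange is legal, the term-by-term limit computed above gives the stated identity.
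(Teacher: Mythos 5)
Your proposal is correct and follows essentially the same route as the paper's Proof~1: take $z\to 1^{+}$ in \eqref{3:19} and use the boundary behavior of $Q_{n+\nu-1}^{(\alpha+1-\nu,\beta+1-\nu)}(z)$ near $z=1$ (your asymptotic is exactly what the paper extracts from \eqref{sing1QJac}, and your $\Gamma(n+\beta+1)$ is in fact the correct factor where the paper's displayed asymptotic has a typo reading $\Gamma(\beta+1-\nu)$). Your added remarks on justifying the term-by-term limit go slightly beyond what the paper records, but the computation and hypotheses match.
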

\begin{proof}[Proof 1]
Start with (\ref{3:19}) and examine the singular behavior of the Jacobi
function of the second kind $Q_\gamma^{(\alpha,\beta)}(z)$ as $z\to 1^{+}$.
Starting with the definition in terms of the Gauss hypergeometric function
of the Jacobi function of the second kind, and applying \cite[(15.8.2)]{NIST:DLMF},
results in the identity
\begin{eqnarray}
&&Q_\gamma^{(\alpha,\beta)}(z)=-\frac{\pi}{2}\csc(\pi\alpha)P_\gamma^{(\alpha,\beta)}(z)\nonumber\\
&&\hspace{1cm}+\frac{2^{\alpha+\beta-1}\Gamma(\alpha)\Gamma(\beta+\gamma+1)}
{\Gamma(\alpha+\beta+\gamma+1)(z-1)^\alpha(z+1)^{\beta}}
\hyp21{\gamma+1,-\alpha-\beta-\gamma}{1-\alpha}{\frac{1-z}{2}},
\label{sing1QJac}
\end{eqnarray}
where $P_\gamma^{(\alpha,\beta)}(z)$ is the Jacobi function of the first kind defined by
\[
P_\gamma^{(\alpha,\beta)}(z)=\frac{\Gamma(\alpha+\gamma+1)}{\Gamma(\alpha+1)\Gamma(\gamma+1)}
\hyp21{-\gamma,\alpha+\beta+\gamma+1}{\alpha+1}{\frac{1-z}{2}}.
\]
Note that $P_\gamma^{(\alpha,\beta)}(z)$ generalizes the Jacobi polynomials for $\gamma=n\in\N_0$.
Using (\ref{sing1QJac}), easily demonstrates that 
\[
(z-1)^{\alpha-\nu+1}Q_{n+\nu-1}^{(\alpha+1-\nu,\beta+1-\nu)}(z)
\sim\frac{2^{\alpha-\nu}\Gamma(\alpha+1-\nu)\Gamma(\beta+1-\nu)}{\Gamma(\alpha+\beta-\nu+2+n)},
\]
for $\Re(\alpha+1-\nu)>0$, and 
(\ref{Jacobi1mnux}) follows.
\end{proof}

\begin{lemma}
Let $a,b\in\C$. Then we have as $0<\tau\to\infty$,
\begin{equation}
\frac{\Gamma(a\pm i\tau)}{\Gamma(b\pm i\tau)}=e^{\frac{i\pi}{2}(a-b)}\tau^{a-b}
\left\{1+\mathcal{O}(\tau^{-1})\right\}.
\label{gammaimaglimits}
\end{equation}
\end{lemma}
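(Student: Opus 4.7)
The plan is to reduce the statement to the classical ratio asymptotic for the Gamma function and then specialize to purely imaginary shifts. I would recall the well-known result, which is a direct corollary of Stirling's series (see e.g.\ \cite[(5.11.12)]{NIST:DLMF}),
\[
\frac{\Gamma(z+a)}{\Gamma(z+b)}=z^{a-b}\left\{1+\frac{(a-b)(a+b-1)}{2z}+\mathcal{O}(|z|^{-2})\right\},
\]
valid as $|z|\to\infty$ uniformly in any sector $|\arg z|\le\pi-\delta$ with $\delta>0$.

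The next step is to apply this formula with $z=\pm i\tau$, which for $\tau>0$ sufficiently large lies on the positive/negative imaginary axis and hence comfortably inside any admissible sector (with $\delta<\pi/2$). I would then use the principal-branch evaluation
\[
(\pm i\tau)^{a-b}=e^{\pm i\pi(a-b)/2}\,\tau^{a-b},
\]
which reproduces the right-hand side of \eqref{gammaimaglimits} with the sign on the RHS matched to the sign in $\Gamma(a\pm i\tau)/\Gamma(b\pm i\tau)$. Finally, the error term $\mathcal{O}(|z|^{-1})$ becomes $\mathcal{O}(\tau^{-1})$ since $|\pm i\tau|=\tau$, completing the estimate.

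There is no serious obstacle; the only points meriting care are (i) checking that $\arg(\pm i\tau)=\pm\pi/2$ lies strictly inside the sector of validity of the Stirling asymptotic so that the bound is uniform as $\tau\to\infty$, and (ii) choosing the principal branch of $z^{a-b}$ consistently on both sides. If one preferred a fully self-contained derivation, one could instead take logarithms and apply Stirling in the form $\log\Gamma(z)=(z-\tfrac12)\log z - z+\tfrac12\log(2\pi)+\mathcal{O}(|z|^{-1})$ to each of $\Gamma(a\pm i\tau)$ and $\Gamma(b\pm i\tau)$, expand $\log(a\pm i\tau)=\log\tau\pm i\pi/2+\mathcal{O}(\tau^{-1})$, and collect terms; the imaginary parts $\pm i\tau\log\tau\mp i\tau$ then cancel between numerator and denominator, leaving precisely $(a-b)(\log\tau\pm i\pi/2)$ in the exponent, which gives \eqref{gammaimaglimits} after exponentiating.
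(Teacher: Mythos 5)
Your proposal is correct and follows essentially the same route as the paper, which likewise invokes the DLMF ratio asymptotic for $\Gamma(z+a)/\Gamma(z+b)$ in a sector $|\arg z|\le\pi-\delta$ and then sets $z=\pm i\tau$ with $(\pm i\tau)^{a-b}=e^{\pm i\pi(a-b)/2}\tau^{a-b}$. Your explicit tracking of the $\pm$ sign in the exponential prefactor is in fact slightly more careful than the statement as printed.
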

\begin{proof}
Let $\delta\in(0,\pi)$. From \cite[(5.11.13)]{NIST:DLMF}, as $z\to\infty$ with $a$ and $b$
real or complex constants, provided $\arg z\le \pi-\delta (<\pi)$.
If one takes $z=\pm i\tau$ with $\tau>0$ then the argument restriction implies
$\arg(\pm i\tau)=\pm \pi/2$, and the result follows.
\end{proof}

\begin{proof}[Proof 2]
One can obtain Jacobi polynomials from Wilson polynomials using 
\cite[(9.1.18)]{Koekoeketal}
\begin{equation}
P_n^{(\alpha,\beta)}(x)=\lim_{\tau\to\infty}\frac{1}{\tau^{2n}n!}
W_n\left(
\frac{(1-x)t^2}{2};
\frac{\alpha+1}{2},
\frac{\alpha+1}{2},
\frac{\beta+1}{2}+i\tau,
\frac{\beta+1}{2}-i\tau
\right).
\label{JacobiWilsonlimit}
\end{equation}
Define $\nu=u-t$. 
Apply (\ref{JacobiWilsonlimit}) to (\ref{Wilsonlimit}) using (\ref{gammaimaglimits}) repeatedly, one obtains
\begin{eqnarray*}
&&\frac{1}{(1-x)^\nu}=\frac{1}{2^\nu}
\frac{
\left(
\frac{\alpha+1}{2},
\frac{\alpha+1}{2}
\right)_t
}
{
\left(
\frac{\alpha+1}{2},
\frac{\alpha+1}{2}
\right)_{t+\nu}
}
\sum_{n=0}^\infty
\frac{(\nu,\alpha+\beta+1)_n}{
(\frac{\alpha+1}{2}+u,\frac{\alpha+1}{2}+u)_n(\alpha+\beta+1)_{2n}
}P_n^{(\alpha,\beta)}(x)\nonumber\\
&&\hspace{2cm}\times
\lim_{\tau\to\infty} \tau^{2\nu+2n}
\hyp23{\alpha+1+n,\nu+n}{
\frac{\alpha+1}{2}+t+\nu+n,
\frac{\alpha+1}{2}+t+\nu+n,
\alpha+\beta+2+2n}{-\tau^2}.
\end{eqnarray*}
The above limit of the ${}_2F_3$ can be computed using the asymptotic expansion for large
variables of the generalized hypergeometric function 
\cite[(16.11.8)]{NIST:DLMF} assuming $\Re(\alpha+1-\nu)>0$. This completes the proof.
\end{proof}

From (\ref{Jacobi1mnux}), one can derive some interesting specialization and limit consequences.

\begin{cor}
Let $x\in(-1,1)$, $\mu\in(-\frac12,\infty)\setminus\{0\}$, $\nu\in\C$, such that $\Re(\mu-\nu+\tfrac12)>0$. Then
\begin{equation}
\frac{1}{(1-x)^\nu}=\frac{2^{2\mu-\nu}\Gamma(\mu-\nu+\frac12)\Gamma(\mu)}
{\sqrt{\pi}\,\Gamma(2\mu+1-\nu)}
\sum_{n=0}^\infty
\frac{(\mu+n)(\nu)_n}
{(2\mu+1-\nu)_n}
C_n^\mu(x).
\label{Geg1mnux}
\end{equation}
\end{cor}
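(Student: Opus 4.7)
The plan is to obtain \eqref{Geg1mnux} as the ultraspherical specialization of the Jacobi expansion \eqref{Jacobi1mnux} already established. Specifically, I would set $\alpha=\beta=\mu-\tfrac12$ in \eqref{Jacobi1mnux} and then convert the resulting Jacobi polynomials into Gegenbauer polynomials via the identity \eqref{GegenJacobi},
\[
P_n^{(\mu-\frac12,\mu-\frac12)}(x)=\frac{(\mu+\frac12)_n}{(2\mu)_n}\,C_n^\mu(x).
\]
The hypothesis $\Re(\mu-\nu+\tfrac12)>0$ is precisely the translation of the convergence condition $\Re(\alpha-\nu+1)>0$ under this specialization, and the restriction $\mu\in(-\tfrac12,\infty)\setminus\{0\}$ ensures the Gegenbauer/Jacobi connection is nondegenerate.

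After substitution, the coefficient of $C_n^\mu(x)$ in the sum becomes
\[
\frac{2(\mu+n)\,\Gamma(2\mu+n)\,(\nu)_n}{\Gamma(\mu+\frac12+n)\,\Gamma(2\mu+1-\nu+n)}\cdot \frac{(\mu+\frac12)_n}{(2\mu)_n},
\]
and writing the Pochhammer symbols as ratios of gamma functions collapses the $\Gamma(\mu+\tfrac12+n)$ and $\Gamma(2\mu+n)$ factors. What survives, after extracting $\Gamma(2\mu+1-\nu+n)=\Gamma(2\mu+1-\nu)(2\mu+1-\nu)_n$, is
\[
\frac{2\,\Gamma(2\mu)}{\Gamma(\mu+\frac12)\,\Gamma(2\mu+1-\nu)}\cdot\frac{(\mu+n)(\nu)_n}{(2\mu+1-\nu)_n}\,C_n^\mu(x),
\]
multiplied by the original prefactor $\Gamma(\mu-\nu+\tfrac12)/2^{\nu}$ coming from \eqref{Jacobi1mnux}.

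The final step is to apply the Legendre duplication formula
\[
\Gamma(2\mu)=\frac{2^{2\mu-1}}{\sqrt{\pi}}\,\Gamma(\mu)\,\Gamma\!\left(\mu+\tfrac12\right),
\]
which clears $\Gamma(\mu+\tfrac12)$ from the denominator and produces the expected $2^{2\mu-\nu}\Gamma(\mu)/\sqrt{\pi}$ prefactor in \eqref{Geg1mnux}. No obstacle of substance arises: the argument is entirely routine algebraic bookkeeping, and the only place one has to be careful is to keep track of the two sources of gamma factors (the Jacobi-to-Gegenbauer normalization and the $n$-independent prefactor of \eqref{Jacobi1mnux}) and then apply duplication at the right moment so that the $n$-dependence of the summand matches the target form exactly.
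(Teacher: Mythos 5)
Your proposal is correct and follows exactly the paper's route: specialize \eqref{Jacobi1mnux} at $\alpha=\beta=\mu-\tfrac12$, convert via \eqref{GegenJacobi}, and finish with the Legendre duplication formula. The gamma-function bookkeeping you carry out is accurate and reproduces the stated prefactor $2^{2\mu-\nu}\Gamma(\mu-\nu+\tfrac12)\Gamma(\mu)/\bigl(\sqrt{\pi}\,\Gamma(2\mu+1-\nu)\bigr)$ precisely.
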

\begin{proof}
Specializing (\ref{Jacobi1mnux}) using the definition of Gegenbauer polynomials in terms of 
Jacobi polynomials (\ref{GegenJacobi}),
which completes the proof.
\end{proof}

\begin{cor}
Let $x\in(-1,1)$, $\mu\in(-\frac12,\infty)\setminus\{0\}$, $\nu\in\C$, such that $\Re(\mu-\nu+\tfrac12)>0$. Then
\begin{equation}
\frac{1}{(1-x)^\nu}=\frac{\Gamma(\frac12-\nu)}
{\sqrt{\pi}\,2^\nu\Gamma(1-\nu)}
\sum_{n=0}^\infty
\frac{
\epsilon_n
(\nu)_n}
{(1-\nu)_n}
T_n(x),
\label{sum1mncCheby}
\end{equation}
where $\epsilon_n:=2-\delta_{n,0}$ is the Neumann factor.
\end{cor}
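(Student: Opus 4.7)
The plan is to obtain \eqref{sum1mncCheby} as the $\mu\to 0^{+}$ limit of the Gegenbauer expansion \eqref{Geg1mnux}. Everything in \eqref{Geg1mnux} except the factor $\Gamma(\mu)$ in the prefactor and the factor $(n+\mu)$ inside the sum has a finite, nonzero limit as $\mu\to 0$. In particular, $2^{2\mu-\nu}\to 2^{-\nu}$, $\Gamma(\mu-\nu+\tfrac12)\to\Gamma(\tfrac12-\nu)$, $\Gamma(2\mu+1-\nu)\to\Gamma(1-\nu)$, and the Pochhammer symbols $(\nu)_n/(2\mu+1-\nu)_n$ pass through the limit trivially. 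The remaining factor $\Gamma(\mu)(n+\mu)$ is singular at $\mu=0$, and must be paired with the Gegenbauer polynomial to get a finite answer.

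The key computation is the limit
\[
\lim_{\mu\to 0^{+}}\Gamma(\mu)(n+\mu)\,C_n^\mu(x)=\epsilon_n T_n(x),
\]
which I would verify by splitting into cases. For $n=0$, $C_0^\mu(x)=1$ and $\Gamma(\mu)\cdot\mu=\Gamma(1+\mu)\to 1=T_0(x)$, matching $\epsilon_0=1$. For $n\ge 1$, one combines $\Gamma(\mu)\sim 1/\mu$ with the well-known limit $\lim_{\mu\to 0^{+}}\mu^{-1}C_n^\mu(x)=(2/n)T_n(x)$ (which follows from \cite[(18.7.24) or 18.7.25]{NIST:DLMF}), producing $(n)(2/n)T_n(x)=2T_n(x)$, matching $\epsilon_n=2$. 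Assembling the surviving prefactor $2^{-\nu}\Gamma(\tfrac12-\nu)/(\sqrt{\pi}\,\Gamma(1-\nu))$ with the termwise limit reproduces the claimed right-hand side.

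The one point that needs care is the exchange of limit and infinite sum. I would justify this by a dominated convergence argument: the bounds $|T_n(x)|\le 1$ for $x\in(-1,1)$, together with the uniform bound $|C_n^\mu(x)/\mu|\le 2[n+1]^{\sigma}$ (analogous to the estimate \eqref{4:24} used in the proof of Theorem~\ref{theo:4}), combine with the polynomial growth of the Pochhammer ratio $(\nu)_n/(2\mu+1-\nu)_n$ (which is $O(n^{-1+\Re(2\mu-\nu)})$ uniformly for small $\mu\ge 0$) under the hypothesis $\Re(\mu-\nu+\tfrac12)>0$ to give an integrable majorant on the index set $\mathbb N_0$, letting us pass the $\mu\to 0^{+}$ limit inside the sum. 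This dominated-convergence step is the only non-routine part of the argument; the rest is bookkeeping with gamma-function limits.
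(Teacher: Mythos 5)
Your proposal is correct and takes essentially the same route as the paper: the paper's proof is a one-line specialization of \eqref{Geg1mnux} via the limit $\lim_{\mu\to0}\frac{n+\mu}{\mu}C_n^\mu(x)=\epsilon_nT_n(x)$ (cited from Andrews--Askey--Roy), which is exactly the termwise limit you compute after writing $\Gamma(\mu)=\Gamma(1+\mu)/\mu$. Your additional discussion of interchanging the limit with the sum goes beyond what the paper records (the paper does not address it), though note your stated exponent for $(\nu)_n/(2\mu+1-\nu)_n$ should be $2\Re\nu-1-2\mu$ rather than $-1+\Re(2\mu-\nu)$.
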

\begin{proof}
Specializing (\ref{Geg1mnux}) using the limit relation for Chebyshev polynomials of the first kind $T_n(x)$
with Gegenbauer polynomials, namely \cite[(6.4.13)]{AAR}
\[
\lim_{\mu\to0}\frac{n+\mu}{\mu}C_n^\mu(x)=\epsilon_nT_n(x),
\]
which completes the proof.
\end{proof}

\begin{cor}
Let $x\in(0,\infty)$, $\alpha>-1$, $\nu\in\C$ such that $\Re(\alpha+1-\nu)>0$. 
Then
\begin{equation}
\frac{1}{x^\nu}
=\Gamma(\alpha+1-\nu)
\sum_{n=0}^\infty
\frac{
(\nu)_n}
{\Gamma(\alpha+1+n)}
L_n^\alpha(x).
\label{sum1mncLag}
\end{equation}
\end{cor}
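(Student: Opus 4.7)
The plan is to obtain (\ref{sum1mncLag}) by passing to the Laguerre limit in the Jacobi expansion (\ref{Jacobi1mnux}) that was just established. Recall the standard confluent limit relating Jacobi and Laguerre polynomials \cite[(9.8.16)]{Koekoeketal}, namely
\[
\lim_{\beta\to\infty} P_n^{(\alpha,\beta)}\!\left(1-\frac{2x}{\beta}\right) = L_n^\alpha(x),
\]
together with the elementary gamma ratio asymptotic $\Gamma(\beta+a)/\Gamma(\beta+b)\sim \beta^{a-b}$ as $\beta\to\infty$ (a special real-variable case of the lemma used for (\ref{gammaimaglimits})).

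The steps I would carry out are the following. First, in (\ref{Jacobi1mnux}) substitute $x\mapsto 1-2x/\beta$; then the left-hand side becomes
\[
\frac{1}{(1-(1-2x/\beta))^\nu}=\frac{\beta^\nu}{(2x)^\nu}.
\]
Second, multiply both sides by $2^\nu\beta^{-\nu}$ so that the left-hand side is simply $1/x^\nu$, independent of $\beta$. Third, on the right-hand side the coefficient of $P_n^{(\alpha,\beta)}(1-2x/\beta)$ becomes
\[
\frac{\Gamma(\alpha-\nu+1)(\nu)_n}{\Gamma(\alpha+n+1)}\cdot \beta^{-\nu}(\alpha+\beta+2n+1)\frac{\Gamma(\alpha+\beta+n+1)}{\Gamma(\alpha+\beta+n+2-\nu)},
\]
and the asymptotic $\beta^{-\nu}\cdot \beta\cdot\beta^{\nu-1}\to 1$ shows that as $\beta\to\infty$ this factor tends to $\Gamma(\alpha-\nu+1)(\nu)_n/\Gamma(\alpha+n+1)$. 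Combining with the Jacobi-to-Laguerre limit yields exactly (\ref{sum1mncLag}).

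The one point that needs care is the interchange of the $\beta\to\infty$ limit with the infinite sum over $n$. I would justify this by a dominated convergence argument: the ratio $\beta^{-\nu}(\alpha+\beta+2n+1)\Gamma(\alpha+\beta+n+1)/\Gamma(\alpha+\beta+n+2-\nu)$ is bounded uniformly in $\beta\ge \beta_0$ by a polynomial in $n$ (using $\Re(\alpha+1-\nu)>0$), while the known estimates for Jacobi polynomials on compact subsets of $(-1,1)$ (uniform in $\beta$ after the rescaling $y=1-2x/\beta$, since these approach the Laguerre polynomials on compacta) combine with the factor $(\nu)_n/\Gamma(\alpha+n+1)$ to produce an absolutely convergent majorant. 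This is the only step that is not a direct computation; everything else is bookkeeping. Once the interchange is justified, the identity (\ref{sum1mncLag}) follows termwise.
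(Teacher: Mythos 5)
Your proposal is correct and follows essentially the same route as the paper: specialize the Jacobi expansion (\ref{Jacobi1mnux}) via the substitution $x\mapsto 1-2x/\beta$ and the confluent limit \cite[(9.8.16)]{Koekoeketal} taking $P_n^{(\alpha,\beta)}$ to $L_n^\alpha$. The paper states this in one line, whereas you additionally verify the gamma-ratio bookkeeping (the factor $\beta^{-\nu}\cdot\beta\cdot\beta^{\nu-1}\to1$) and sketch a dominated-convergence justification for interchanging the limit with the sum, which the paper leaves implicit.
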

\begin{proof}
Specializing (\ref{Jacobi1mnux}) using the limit relation for Laguerre polynomials $L_n^\alpha(x)$
with Gegenbauer polynomials, namely \cite[(9.8.16)]{Koekoeketal}
\[
\lim_{\beta\to\infty}P_n^{(\alpha,\beta)}\left(1-\frac{2x}{\beta}\right)=L_n^\alpha(x),
\]
which completes the proof.
\end{proof}
The above result generalizes \cite[(18.18.19)]{NIST:DLMF} for $\nu=-n$, $n\in\N_0$.

\section{Definite integrals}
\label{Definiteintegralsinfiniteseriesandqintegrals}

Consider a sequence of orthogonal polynomials 
$(p_k(x;{\boldsymbol \alpha}))$ (over a domain $A$, with 
positive weight $w(x;{\boldsymbol \alpha})$) associated 
with a linear functional ${\bf u}$, where 
${\boldsymbol\alpha}$ is a set of fixed parameters. 
Define $s_k$, $k\in\mathbb N_0$ by
\[
s^2_k:=\int_A 
p_k(x;{\boldsymbol \alpha})
p_k(x;{\boldsymbol \alpha})
\, w(x;{\boldsymbol \alpha})\, dx.
\]
In order to justify interchange between a generalized 
generating function via connection relation and an 
orthogonality relation for $p_k$, we show that the 
double sum/integral converges in the $L^2$-sense with 
respect to the weight $w(x;{\boldsymbol \alpha})$.
This requires
\begin{eqnarray} \label{form-inve-L2}
\sum_{k=0}^\infty d_k^2 s_k^2<\infty,
\end{eqnarray}
where $\displaystyle d_k=\sum_{n=k}^\infty a_n c_{k,n}$.

Here $a_n$ is the coefficient multiplying the 
orthogonal polynomial in the original generating 
function, and $c_{k,n}$ is the connection coefficient 
for $p_k$ (with appropriate set of parameters).

\begin{lemma}
\label{lemmasum}
Let $\bf u$ be a classical linear functional and 
let $(p_n(x))$, $n\in\mathbb N_0$ be the sequence 
of orthogonal polynomials associated with $\bf u$.
If $|p_n(x)|\le K(n+1)^\sigma \gamma^n$, with $K$, 
$\sigma$ and $\gamma$ constants independent of $n$, 
then $|s_n|\le  K(n+1)^\sigma \gamma^n |s_0|$.
\end{lemma}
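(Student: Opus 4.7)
The plan is straightforward: since $s_n$ is simply the $L^2(w)$-norm of $p_n$, the pointwise bound on $p_n$ should lift directly to a bound on $s_n$, with the constant expressed through $s_0$.

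First I would write out the definition
\[
s_n^2 = \int_A p_n(x;\boldsymbol{\alpha})^2\, w(x;\boldsymbol{\alpha})\, dx,
\]
insert the hypothesis $|p_n(x)| \le K(n+1)^\sigma \gamma^n$ pointwise inside the integrand (legitimate since the weight $w$ is positive on $A$), and pull the $x$-independent bound outside to obtain
\[
s_n^2 \;\le\; K^2 (n+1)^{2\sigma}\gamma^{2n}\int_A w(x;\boldsymbol{\alpha})\, dx.
\]

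Next I would identify the remaining integral with $s_0^2$. By the standard normalization for a classical orthogonal polynomial sequence, $p_0(x;\boldsymbol{\alpha}) = 1$, so
\[
s_0^2 = \int_A p_0^2\, w\, dx = \int_A w(x;\boldsymbol{\alpha})\, dx,
\]
and hence $s_n^2 \le K^2(n+1)^{2\sigma}\gamma^{2n}\, s_0^2$. Taking (positive) square roots yields the stated inequality $|s_n| \le K(n+1)^\sigma \gamma^n |s_0|$.

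The only real obstacle is a bookkeeping one: confirming that $\int_A w\, dx$ equals $s_0^2$ under the normalization used in the paper, i.e.\ that $p_0 \equiv 1$. Since the lemma refers to a classical linear functional and the later applications (Askey--Wilson, continuous $q$-Jacobi, continuous $q$-ultraspherical/Rogers) all use this monic-at-$n=0$ convention, this is automatic; otherwise one absorbs $p_0^2$ into the constant $K$, which does not affect the growth estimate in $n$. Positivity of $w$ on $A$ (guaranteed by the hypothesis that $\mathbf{u}$ is classical and given via a positive weight) is what allows the pointwise majorization to be moved under the integral, and this is the only analytic step required.
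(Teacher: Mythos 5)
Your argument is correct and is essentially the paper's own proof: the paper likewise bounds $s_n^2=\langle{\bf u},p_n^2\rangle$ by $\left(K(n+1)^\sigma\gamma^n\right)^2\langle{\bf u},1\rangle$ and identifies $\langle{\bf u},1\rangle$ with $s_0^2$. Your explicit remark about the normalization $p_0\equiv 1$ (or absorbing $p_0^2$ into the constant) only makes precise a step the paper leaves implicit.
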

\begin{proof}
Let $n\in\mathbb N_0$, then
\[
s_n^2=\langle {\bf u}, p^2_n\rangle \le \left(K(n+1)^\sigma 
\gamma^n\right)^2\langle {\bf u}, 1\rangle= \left(K(n
+1)^\sigma \gamma^n\right)^2  s_0^2.
\]
The result follows.
\end{proof}

Given
$|p_k(x;{\boldsymbol \alpha})|\le K(k+1)^\sigma 
\gamma^k$, with $K$, $\sigma$ and $\gamma$ 
constants independent of $k$, an orthogonality 
relation for $p_k$, and $|t|<1/\gamma$, one has
\[
\sum_{n=0}^\infty |a_n| \sum_{k=0}^n |c_{k,n}s_k|<\infty,
\]
which implies
\[
\sum_{k=0}^\infty |d_ks_k|<\infty.
\]
Therefore one has confirmed (\ref{form-inve-L2}),
indicating that we are justified in reversing 
the order of our generalized sums and the 
orthogonality relations under the above assumptions.

All polynomial families used throughout 
this paper fulfill such assumptions.
See for instance (\ref{4:24}).
Such inequalities depend entirely on the 
representation of the linear functional.
In this section one has integral representations, infinite series, and
representations in terms of the $q$-integral. 
In all the cases Lemma \ref{lemmasum} can be 
applied and we are justified in interchanging 
the linear form and the infinite sum.
\medskip

\subsection{The Askey-Wilson polynomials}
 
The orthogonality relation for the Askey-Wilson polynomials is given
by \cite[(14.1.2)]{Koekoeketal}
\begin{equation}
\label{orthoAW}
\int_{-1}^1 p_m(x;{\bf a}|q) p_n(x;{\bf a}|q)\frac{w(x;{\bf a};q)}{\sqrt{1-x^2}}dx
=2\pi h_n({\bf a};q) \delta_{m,n},
\end{equation}
where ${\bf a}:=\{a_1,a_2,a_3,a_4\}$, $w:[-1,1]\to[0,\infty)$ is defined by
\[
w(x;{\bf a};q):=\left|
\frac{(e^{2i\theta};q)_\infty}
{(a_1e^{i\theta},a_2e^{i\theta},a_3e^{i\theta},a_4e^{i\theta};q)_\infty}
\right|^2, \quad x=\cos\theta,
\]
and
\[
h_n({\bf a};q):=
\frac{(a_1a_2a_3a_4q^{n-1};q)_n(a_1a_2a_3a_4q^{2n};q)_\infty}
{(q^{n+1},a_1a_2q^n,a_1a_3q^n,a_1a_4q^n,a_2a_3q^n,a_2a_4q^n,a_3a_4q^n;q)_\infty}.
\]

\begin{cor} Let $n\in\mathbb N_0$, $\beta\in(-1,1)$,
$\max\{|a_1|,|a_2|,|a_3|,|a_4|,|t|\}<1$, 
$c_n(\beta,t,{\bf a};q)$ defined as in 
(\ref{3:15}), $x=\cos\theta\in(-1,1)$.
Then
\[
\int_{-1}^{1}
\frac{(t \beta e^{i\theta},t\beta e^{-i\theta};q)_\infty}
{(te^{i\theta},te^{-i\theta};q)_\infty}\,
p_n(x;{\bf a}\vert q) 
\frac{w(x;{\bf a};q)}{\sqrt{1-x^2}} dx 
= {2\pi}h_n({\bf a};q) c_n(\beta,t,{\bf a};q).
\]
\end{cor}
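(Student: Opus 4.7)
The plan is to obtain the integral identity as a direct consequence of the Ismail--Simeonov expansion (\ref{3:15}) together with the orthogonality relation (\ref{orthoAW}) for the Askey--Wilson polynomials. Concretely, I would start from (\ref{3:15}), multiply both sides by $p_n(x;{\bf a}|q)\,w(x;{\bf a};q)/\sqrt{1-x^2}$, and integrate over $x\in[-1,1]$. Interchanging the sum over $k$ with the integral collapses the right-hand side, by orthogonality, to the single term with $k=n$, giving $2\pi h_n({\bf a};q)\,c_n(\beta,t,{\bf a};q)$.

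The only genuine content in the argument is the justification of the sum/integral interchange, and this is exactly what the framework of Section~\ref{Definiteintegralsinfiniteseriesandqintegrals} is designed to supply. I would invoke Lemma \ref{lemmasum} with the orthogonal family $(p_k(\,\cdot\,;{\bf a}|q))$, whose uniform polynomial growth bound (analogous to (\ref{4:24}) for the continuous $q$-ultraspherical/Rogers case) yields $|s_k|\le K(k+1)^\sigma\gamma^k |s_0|$ for appropriate constants. Combined with the decay of the coefficients $c_k(\beta,t,{\bf a};q)$, which carry a factor $t^k$ together with bounded $q$-Pochhammer and basic hypergeometric contributions when $\max\{|a_1|,|a_2|,|a_3|,|a_4|,|t|\}<1$, this gives $\sum_{k=0}^\infty |c_k(\beta,t,{\bf a};q)\,s_k|<\infty$, which is precisely the $L^2$-type hypothesis (\ref{form-inve-L2}) needed to swap summation and integration.

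The main obstacle, and really the only one worth paying attention to, is verifying that the coefficients $c_k(\beta,t,{\bf a};q)$ in (\ref{3:15}) decay geometrically in $k$ under the stated parameter restrictions. The infinite $q$-Pochhammer quotients appearing in $c_k$ tend to $1$ as $k\to\infty$, the finite $(q,q^{k-1}a_1a_2a_3a_4;q)_k$ factor in the denominator is bounded away from zero uniformly in $k$, and the ${}_8\phi_7$ factor is bounded uniformly in $k$ for $|a_4 t|<1$; combined with the explicit $t^k$ prefactor and the $k$-independent bound $(\beta;q)_k\le (|\beta|;|q|)_\infty$ for $|\beta|<1$, this gives the required geometric decay. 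Once this is in place, the pointwise expansion (\ref{3:15}) transfers to an $L^2([-1,1],w\,dx/\sqrt{1-x^2})$ expansion.

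Having justified the exchange, the computation itself is one line: orthogonality (\ref{orthoAW}) selects the $k=n$ term, the Kronecker delta removes the sum, and the prefactor $2\pi h_n({\bf a};q)$ combines with $c_n(\beta,t,{\bf a};q)$ to give the stated right-hand side. No residual terms arise, and the identity follows.
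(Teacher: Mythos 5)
Your proposal is correct and follows essentially the same route as the paper: multiply (\ref{3:15}) by $p_n(x;{\bf a}|q)\,w(x;{\bf a};q)/\sqrt{1-x^2}$, integrate over $(-1,1)$, and let orthogonality (\ref{orthoAW}) collapse the sum to the $k=n$ term. The interchange of sum and integral that you elaborate on is handled by the paper in the general preamble to Section \ref{Definiteintegralsinfiniteseriesandqintegrals} via Lemma \ref{lemmasum} and (\ref{form-inve-L2}), so your extra detail is consistent with, rather than a departure from, the paper's argument.
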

\begin{proof}
Multiply (\ref{3:15}) by 
$w(x;{\bf a};q) p_n(x;{\bf a}\vert q)/\sqrt{1-x^2}$ 
and integrate over $(-1,1)$ using (\ref{orthoAW})
produces the desired result.
\end{proof}

\subsection{The Wilson polynomials}

\begin{cor}
Let $n\in{\mathbb N}_0$, $t,u\in{\mathbb C}$, $\Re(a_1,a_2,a_3,a_4)>0$, and non-real parameters
occur in conjugate pairs. Then
{\small \[\begin{split}
\int_0^\infty & \dfrac{\Gamma(t+ix)
\Gamma(t-ix)}{\Gamma(u+ix)\Gamma(u-ix)}\, 
W_n(x^2;{\bf a}) {\sf W}(x;{\bf a}) dx=
H_n({\bf a})
\frac
{(a_{123})_u(a_1,a_2,a_3)_t(a_{123}+u)_{2n} (a_{1234}-1)_n }
{(a_{123})_t(a_1,a_2,a_3)_u (a_{123}+t)_n (a_{1234}-1)_{2n}
n!}
\\[3mm]
& \hspace{-3mm}\times\frac{(u-t)_n}
{(a_1+u,a_2+u,a_3+u)_n}\hyp{7}{6}
{\lambda,1+\lambda/2,a_{12}+n,a_{13}+n,
a_{23}+n,u-a_4,u-t+n}{\lambda/2, a_1+u+n,
a_2+u+n,a_3+u+n,a_{123}+t+n,a_{1234}+2n} 1,
\end{split}\]}
where $\lambda=2n-1+a_{123}+u$, the weight 
function for the Wilson polynomials is \cite[(9.1.2)]{Koekoeketal}
\[
{\sf W}(x;{\bf a}):=\left|\dfrac
{\Gamma(a_1+ix)\Gamma(a_2+ix)\Gamma(a_3+ix)
\Gamma(a_4+ix)}{\Gamma(2ix)}\right|^2,
\]
and  \cite[(9.1.2)]{Koekoeketal}
\begin{eqnarray*}
&&H_n({\bf a})=\int_0^\infty 
W_n(x^2;{\bf a})
W_n(x^2;{\bf a})
{\sf W}(x;{\bf a}) dx\\[0.2cm]
&&\hspace{1cm}=\frac{2\pi n!\,
\Gamma(a_{12}+n)
\Gamma(a_{13}+n)
\Gamma(a_{14}+n)
\Gamma(a_{23}+n)
\Gamma(a_{24}+n)
\Gamma(a_{34}+n)
}
{(a_{1234}-1+2n)\Gamma(a_{1234}-1+n)}.
\end{eqnarray*}
\end{cor}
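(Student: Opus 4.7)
The plan is to mirror the technique used in the immediately preceding corollary for the Askey-Wilson polynomials. I would start with the Wilson expansion (\ref{Wilsonlimit}) rewritten schematically as
\[
\frac{\Gamma(t+ix)\Gamma(t-ix)}{\Gamma(u+ix)\Gamma(u-ix)} = \sum_{k=0}^\infty \omega_k(t,u,{\bf a})\,W_k(x^2;{\bf a}),
\]
where $\omega_k(t,u,{\bf a})$ denotes the explicit coefficient appearing on the right-hand side of (\ref{Wilsonlimit}), namely the product of Pochhammer ratios multiplied by the very-well-poised ${}_7F_6$ of unit argument.

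Next I would multiply both sides by $W_n(x^2;{\bf a})\,{\sf W}(x;{\bf a})$ and integrate over $(0,\infty)$. Interchanging sum and integral and applying the Wilson orthogonality relation
\[
\int_0^\infty W_m(x^2;{\bf a})\,W_n(x^2;{\bf a})\,{\sf W}(x;{\bf a})\,dx = H_n({\bf a})\,\delta_{m,n}
\]
collapses the right-hand side to the single surviving term $H_n({\bf a})\,\omega_n(t,u,{\bf a})$, which after regrouping the Pochhammer prefactors matches the expression stated in the corollary.

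The interchange of sum and integral is justified by the $L^2$-convergence criterion developed in Section \ref{Definiteintegralsinfiniteseriesandqintegrals}. Applying Lemma \ref{lemmasum} to the Wilson polynomials, which satisfy a polynomial-in-$k$ bound, together with the Pochhammer decay $(u-t)_k/((a_1+u,a_2+u,a_3+u))_k$ built into $\omega_k$ and the uniform boundedness in $k$ of the terminating-type ${}_7F_6$, confirms $\sum_{k\ge 0}|\omega_k|\,|s_k|<\infty$ with $s_k^2=H_k({\bf a})$. This verifies condition (\ref{form-inve-L2}) and legitimizes the termwise integration.

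The main obstacle, as already flagged in Section \ref{Definiteintegralsinfiniteseriesandqintegrals}, is the convergence bookkeeping in the Wilson (half-line, continuous) setting rather than the compact Askey-Wilson setting. In particular, one should verify that the factor $\Gamma(t\pm ix)/\Gamma(u\pm ix)$, which by (\ref{gammaimaglimits}) decays only like a power of $x$ at infinity, remains integrable against ${\sf W}(x;{\bf a})$ under the hypotheses $\Re(a_j)>0$ and the conjugate-pair convention; this guarantees that the underlying generating function lies in $L^2$ of the Wilson weight, at which point the termwise integration is legitimate and the corollary follows.
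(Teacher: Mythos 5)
Your proposal matches the paper's proof exactly: multiply the Wilson expansion (\ref{Wilsonlimit}) by $W_n(x^2;{\bf a})\,{\sf W}(x;{\bf a})$, integrate over $(0,\infty)$, and apply Wilson orthogonality so that only the $n$-th term survives, with the interchange of sum and integral justified by the $L^2$ criterion of Section \ref{Definiteintegralsinfiniteseriesandqintegrals} and Lemma \ref{lemmasum}. Your additional remarks on the power-law decay of $\Gamma(t\pm ix)/\Gamma(u\pm ix)$ via (\ref{gammaimaglimits}) only make explicit what the paper leaves to its general convergence discussion.
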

\begin{proof}
Multiply both sides 
of the Wilson polynomial expansion (\ref{Wilsonlimit}) by 
$W_m(x^2;{\bf a}){\sf W}(x;{\bf a})$, 
integrate over $(0,\infty)$ using orthogonality of the Wilson polynomials. 
Replace in resulting expression $m \mapsto n$ and the result follows.
\end{proof}

\subsection{Continuous $q$-Jacobi and Jacobi polynomials}

The orthogonality relation for continuous $q$-Jacobi 
polynomials \cite[(14.10.2)]{Koekoeketal}, after scaling so 
that $q^{\alpha+\frac12}\mapsto \alpha$ and 
$q^{\beta+\frac12}\mapsto \gamma$ is
\[
\int_{-1}^{1}
P_m^{(\alpha,\gamma)}(x\vert q)P_n^{(\alpha,\gamma)}(x\vert q)
\frac{w(x;\alpha,\gamma;q)}{\sqrt{1-x^2}} dx = {2\pi}g_n(\alpha,\gamma;q) \delta_{mn},
\]
where 
\[
w(x; \alpha, \gamma;q):=
\left| \frac{ (e^{2 i \theta};q)_{\infty }} { (\alpha^\frac12 e^{i \theta },-\gamma^\frac12 e^{i \theta } ;q^\frac12)_{\infty }} \right|^2,
\]
and 
\[
g_n(\alpha,\gamma;q):= 
\frac{\alpha^n(1-\alpha\gamma)(q^{\frac12}\alpha,q^{\frac12}\gamma,-q(\alpha \gamma)^{\frac12};q)_n
((\alpha \gamma q)^{\frac12},q(\alpha \gamma )^{\frac12};q)_\infty
}
{(1-q^{2n}\alpha\gamma)(q,\alpha \gamma,-(\alpha\gamma)^\frac12;q)_n
(q,q^\frac12 \alpha, q^\frac12 \gamma,-(\alpha \gamma)^\frac12, -(\alpha \gamma q)^\frac12;q)_\infty}.
\]
 
\begin{cor}
Let $n\in\mathbb N_0$, $x=\cos\theta\in(-1,1)$, $\alpha,\gamma\in(-\frac12,\infty)$,
$d_n(\beta,t,\alpha,\gamma;q)$ defined as in (\ref{3:18}). Then
\[
\int_{-1}^{1}
\frac{(t\beta e^{i\theta},t\beta e^{-i\theta};q)_\infty}
{(te^{i\theta},te^{-i\theta};q)_\infty}
\,P_n^{(\alpha,\gamma)}(x\vert q) 
\frac{w(x;\alpha,\gamma;q)}{\sqrt{1-x^2}} dx 
= {2\pi}g_n(\alpha,\gamma;q) d_n(\beta,t,\alpha,\gamma; q).
\]
\end{cor}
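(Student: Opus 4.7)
The plan is to mimic exactly the proof pattern used for the Askey-Wilson orthogonality corollary just above, applied this time to the continuous $q$-Jacobi orthogonality relation. The starting point is the expansion (\ref{3:18}) which already expresses the left-hand side kernel as a convergent sum
\[
\frac{(t\beta e^{i\theta},t\beta e^{-i\theta};q)_\infty}{(te^{i\theta},te^{-i\theta};q)_\infty}=\sum_{m=0}^\infty d_m(\beta,t,\alpha,\gamma,q)\,P_m^{(\alpha,\gamma)}(x|q).
\]
I would multiply both sides of this identity by $P_n^{(\alpha,\gamma)}(x|q)\,w(x;\alpha,\gamma;q)/\sqrt{1-x^2}$ and then integrate term-by-term over $x\in(-1,1)$. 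The orthogonality relation for the continuous $q$-Jacobi polynomials recorded immediately before the statement collapses the resulting double expression into a single term via the Kronecker delta $\delta_{m,n}$, picking out exactly $2\pi\,g_n(\alpha,\gamma;q)\,d_n(\beta,t,\alpha,\gamma;q)$, which is the claimed identity.

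The only genuine issue is the justification of the term-by-term integration (interchange of sum and integral). This is precisely what the discussion around Lemma \ref{lemmasum} and inequality (\ref{form-inve-L2}) was designed to handle: one needs the series to converge in the $L^2$-sense with respect to the weight $w(x;\alpha,\gamma;q)/\sqrt{1-x^2}$. Using the polynomial growth bounds of the type (\ref{4:24}) for the continuous $q$-Jacobi family (which are of the same flavor as for the continuous $q$-ultraspherical/Rogers polynomials, since the former sits higher in the $q$-Askey scheme and its norm-behavior follows by the same estimates (\ref{2:12})--(\ref{2:14}) applied to the explicit formula (\ref{3:16})), and noting that the coefficients $d_m(\beta,t,\alpha,\gamma;q)$ decay geometrically in $|t|<1$ thanks to the leading $t^m\alpha^{-m/2}$ factor and the boundedness of the ${}_8\phi_7$ for admissible parameters, the hypotheses of Lemma \ref{lemmasum} are met.

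The main obstacle, such as it is, is verifying this uniform/absolute bound on the expansion coefficients $d_m$ together with the polynomial bound on $P_m^{(\alpha,\gamma)}(x|q)$ over $[-1,1]$; once both are in hand, Fubini applies and the computation is purely mechanical. No new identities are required beyond (\ref{3:18}) and the orthogonality relation displayed just before the corollary, so the proof reduces to a one-line invocation of orthogonality after the convergence is secured.

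\begin{proof}
Multiply (\ref{3:18}) by $P_n^{(\alpha,\gamma)}(x|q)\,w(x;\alpha,\gamma;q)/\sqrt{1-x^2}$ and integrate over $x\in(-1,1)$. The discussion following Lemma \ref{lemmasum}, together with the bounds (\ref{2:12})--(\ref{2:14}) and the geometric decay in $t^n$ of the coefficients $d_n(\beta,t,\alpha,\gamma;q)$, justifies the interchange of summation and integration. Applying the orthogonality relation for the continuous $q$-Jacobi polynomials recorded above then collapses the resulting series to its $n$-th term, yielding $2\pi g_n(\alpha,\gamma;q)\,d_n(\beta,t,\alpha,\gamma;q)$, as claimed.
\end{proof}
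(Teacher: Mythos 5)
Your proposal matches the paper's own proof exactly: multiply the expansion (\ref{3:18}) by $P_n^{(\alpha,\gamma)}(x|q)\,w(x;\alpha,\gamma;q)/\sqrt{1-x^2}$, integrate over $(-1,1)$, and invoke the continuous $q$-Jacobi orthogonality relation, with the interchange of sum and integral covered by the general discussion around Lemma \ref{lemmasum}. Your added remarks on justifying the term-by-term integration are consistent with (indeed slightly more explicit than) what the paper provides.
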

\begin{proof}
Multiply (\ref{3:18}) by 
$w(x;\alpha,\gamma;q)P_n^{(\alpha,\gamma)}(x\vert q)/\sqrt{1-x^2}$ 
and integrate over $(-1,1)$ produces the result.
\end{proof}

\begin{cor} Let $n\in\N_0$, $\alpha,\beta>-1$, $\nu\in\C$, such that $\Re(\alpha+1-\nu)>0$. Then
\[
\int_{-1}^1 (1-x)^{-\nu} P_n^{(\alpha,\beta)}(x) (1-x)^\alpha (1+x)^\beta dx=
\frac{
2^{\alpha+\beta+1-\nu}\Gamma(\alpha+1-\nu)(\nu)_n\Gamma(\beta+1+n)
}{
n!\Gamma(\alpha+\beta+2-\nu+n)
}.
\]
\end{cor}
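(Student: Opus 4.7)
The plan is to deduce this integral directly from the Jacobi expansion (\ref{Jacobi1mnux}) by multiplying by a Jacobi polynomial together with the Jacobi weight and invoking orthogonality. First, I multiply both sides of (\ref{Jacobi1mnux}) by $P_n^{(\alpha,\beta)}(x)(1-x)^\alpha(1+x)^\beta$; the left-hand side becomes exactly the integrand appearing in the claim. The hypothesis $\Re(\alpha+1-\nu)>0$ together with $\beta>-1$ ensures that this integrand is absolutely integrable on $(-1,1)$, since the only potential singularity, at $x=1$, is of the form $(1-x)^{\alpha-\nu}$ with $\Re(\alpha-\nu)>-1$.

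Second, I integrate over $(-1,1)$ and interchange the order of sum and integral on the right. This interchange is legitimate by the $L^2$-convergence framework discussed in Section \ref{Definiteintegralsinfiniteseriesandqintegrals}: the same hypothesis $\Re(\alpha+1-\nu)>0$ places $(1-x)^{-\nu}$ in $L^2((1-x)^{\alpha}(1+x)^{\beta}dx)$, and the combination of Lemma \ref{lemmasum} with the Stirling-type decay of the expansion coefficients in (\ref{Jacobi1mnux}) yields the required absolute convergence. I then apply the standard Jacobi orthogonality relation
\[
\int_{-1}^1 P_m^{(\alpha,\beta)}(x)P_n^{(\alpha,\beta)}(x)(1-x)^\alpha(1+x)^\beta\,dx=\frac{2^{\alpha+\beta+1}\Gamma(\alpha+n+1)\Gamma(\beta+n+1)}{(2n+\alpha+\beta+1)n!\,\Gamma(\alpha+\beta+n+1)}\delta_{mn},
\]
so that only the $n$th term of the expansion survives on the right.

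Third, I multiply the coefficient of $P_n^{(\alpha,\beta)}(x)$ in (\ref{Jacobi1mnux}) by this squared norm. The factors $2n+\alpha+\beta+1$, $\Gamma(\alpha+\beta+1+n)$, and $\Gamma(\alpha+1+n)$ cancel cleanly, leaving precisely the claimed right-hand side. The only genuine obstacle is the $L^2$ justification for the interchange; once that is accepted from Section \ref{Definiteintegralsinfiniteseriesandqintegrals}, the remainder of the argument is pure bookkeeping.
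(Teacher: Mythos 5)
Your proposal is correct and is exactly the paper's argument: the paper's proof simply cites the Jacobi orthogonality relation and the expansion (\ref{Jacobi1mnux}), and your computation (with the correct squared norm, and the cancellation of $2n+\alpha+\beta+1$, $\Gamma(\alpha+\beta+1+n)$, and $\Gamma(\alpha+1+n)$) fills in the bookkeeping the paper leaves implicit. The extra attention you give to the integrability at $x=1$ and the sum--integral interchange is a welcome addition, not a deviation.
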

\begin{proof}
Follows from orthogonality of Jacobi polynomials \cite[(9.8.2)]{Koekoeketal} and (\ref{Jacobi1mnux}).
\end{proof}

\subsection{The continuous $q$-ultraspherical/Rogers and Gegenbauer polynomials}

The property of orthogonality for continuous 
$q$-ultraspherical/Rogers polynomials found 
in Koekoek et al. (2010) \cite[(3.10.16)]{Koekoeketal} 
is given by
\begin{equation}
\label{ortho}
\int_{-1}^1 C_m(x;\beta|q)C_n(x;\beta|q)
\frac{w(x;\beta|q)}{\sqrt{1-x^2}}\, dx=
2\pi\frac{(1-\beta)(\beta,q\beta;q)_\infty 
(\beta^2;q)_n}{(1-\beta q^n)(\beta^2,q;q)_\infty (q;q)_n
}\delta_{mn},
\end{equation}
where $w:(-1,1)\to[0,\infty)$ is the 
weight function defined by
\begin{equation}
\label{weight}
w(x;\beta|q):=
\left|
\frac{(e^{2i\theta};q)_\infty}
{(\beta e^{2i\theta};q)_\infty}
\right|
^2.
\end{equation}
We use this orthogonality relation for 
proofs of the following definite integrals.

\begin{cor}
Let $n\in\mathbb N_0$, $\beta,\gamma\in (-1,1)
\setminus\{0\}$, $0<|q|<1$, $|t|<1$. Then
\begin{eqnarray}
&&\int^1_{-1} \frac{(t\beta e^{i \theta},t\beta 
e^{-i\theta};q)_\infty}{(te^{i\theta},te^{-i
\theta};q)_\infty} C_n(x;\gamma|q)\frac{w(x;\gamma|q)}
{\sqrt{1-x^2}} dx \nonumber\\[0.2cm]
&&\hspace{5cm}=2\pi \frac{(\gamma,\gamma q;q)_\infty
(\beta,\gamma^2;q)_n}{(\gamma^2,q;q)_\infty
(q,q\gamma;q)_n}\qhyp21{\beta\gamma^{-1},\beta q^n}
{\gamma q^{n+1}}{q,\gamma t^2}t^n.
\label{6:41}
\end{eqnarray}
\end{cor}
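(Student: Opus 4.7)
The plan is direct: this corollary is the natural consequence of pairing the generalized Rogers expansion from Theorem \ref{theo:4}, equation (\ref{4:21}), with the orthogonality relation (\ref{ortho}) for the continuous $q$-ultraspherical/Rogers polynomials. First I would multiply both sides of (\ref{4:21}) by $C_n(x;\gamma|q)\, w(x;\gamma|q)/\sqrt{1-x^2}$ and integrate over $(-1,1)$. On the left side this produces exactly the integral in (\ref{6:41}). On the right side, after interchanging sum and integral, the orthogonality relation collapses the sum to a single surviving term at index $n$, contributing the factor $2\pi(1-\gamma)(\gamma,q\gamma;q)_\infty(\gamma^2;q)_n / [(1-\gamma q^n)(\gamma^2,q;q)_\infty(q;q)_n]$, multiplied by the expansion coefficient from (\ref{4:21}).

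The remaining task is a small $q$-Pochhammer simplification. The combined coefficient of the ${}_2\phi_1$ contains the factor $(\beta;q)_n(1-\gamma)/[(\gamma;q)_n(1-\gamma q^n)]$. Since $(\gamma;q)_n = (1-\gamma)(q\gamma;q)_{n-1}$ and $(q\gamma;q)_n = (1-\gamma q^n)(q\gamma;q)_{n-1}$, this collapses to $(\beta;q)_n/(q\gamma;q)_n$, giving the stated prefactor
\[
2\pi\,\frac{(\gamma,\gamma q;q)_\infty(\beta,\gamma^2;q)_n}{(\gamma^2,q;q)_\infty(q,q\gamma;q)_n}\, t^n,
\]
multiplying the ${}_2\phi_1$. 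This matches the right-hand side of (\ref{6:41}).

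The only nontrivial point is justifying the interchange of summation and integration, but this is precisely what the preceding discussion in Section \ref{Definiteintegralsinfiniteseriesandqintegrals} addresses. Specifically, the polynomial bound (\ref{4:24}) together with Lemma \ref{lemmasum} and the condition (\ref{form-inve-L2}) guarantees $L^2$-convergence with respect to the weight $w(x;\gamma|q)/\sqrt{1-x^2}$, so Fubini applies under the hypothesis $|t|<1$ with $\gamma\in(-1,1)\setminus\{0\}$. Thus the proof reduces to the two-line computation above.
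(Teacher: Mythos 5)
Your proposal is correct and follows exactly the paper's own route: multiply the generalized Rogers generating function (\ref{4:21}) by $C_n(x;\gamma|q)\,w(x;\gamma|q)/\sqrt{1-x^2}$, integrate over $(-1,1)$, and apply the orthogonality relation (\ref{ortho}); your $q$-Pochhammer simplification of $(1-\gamma)(\beta;q)_n/[(\gamma;q)_n(1-\gamma q^n)]$ to $(\beta;q)_n/(q\gamma;q)_n$ is the right bookkeeping, and your appeal to Section \ref{Definiteintegralsinfiniteseriesandqintegrals} for the interchange of sum and integral matches the paper's justification.
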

\begin{proof}We begin with the generalized 
generating function (\ref{4:21}),
multiply both sides by
\[
C_m(x;\gamma|q)\frac{w(x;\gamma|q)}{\sqrt{1-x^2}},
\]
where $w(x;\gamma)$ is obtained from 
(\ref{weight}), integrating over $(-1,1)$ 
using the orthogonality relation (\ref{ortho}),
produces the desired result.
\end{proof}

\begin{cor} Let $n\in{\mathbb N}_0$, 
$\lambda,\mu\in(-\frac12,\infty)\setminus\{0\}$, $|t|<1$. Then
\[
\int_{-1}^1 \dfrac{C_n^\mu(x)}{(1-2tx
+t^2)^\lambda}\, 
(1-x^2)^{\mu-\frac12}
dx
=\dfrac{\sqrt \pi\, \Gamma(\mu+\tfrac12)(\lambda,2\mu)_n}
{\Gamma(\mu+1)(\mu+1)_n n!}\,\hyp 21
{\lambda-\mu, \lambda+n}{\mu+n+1} {t^2} t^n.
\]
\end{cor}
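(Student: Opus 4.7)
The plan is to mirror the proof of the preceding corollary (\ref{6:41}) in the classical (non-$q$) setting. First, I would establish the non-$q$ analogue of the generalized Rogers expansion (\ref{4:21}), namely
\[
\frac{1}{(1-2tx+t^2)^\lambda}=\sum_{n=0}^\infty \frac{(\lambda)_n}{(\mu)_n}\,t^n\,{}_2F_1\!\left(\lambda-\mu,\lambda+n;\mu+n+1;t^2\right) C_n^\mu(x).
\]
This may be obtained either by taking the $q\uparrow 1^{-}$ limit of Theorem \ref{theo:4} under $\beta\mapsto q^\lambda$, $\gamma\mapsto q^\mu$, using the Section~2 limit relations together with $\lim_{q\uparrow 1^{-}} C_n(x;q^\mu|q)=C_n^\mu(x)$, or directly by applying the Gegenbauer connection relation (the non-$q$ analogue of (\ref{4:20})) to the standard Gegenbauer generating function (\ref{4:23}) and reversing the order of summation, exactly as in the proof of Theorem \ref{theo:4}.

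Next, I would multiply both sides by $C_n^\mu(x)(1-x^2)^{\mu-1/2}$, integrate over $(-1,1)$, and invoke Gegenbauer orthogonality
\[
\int_{-1}^{1} C_m^\mu(x) C_n^\mu(x)(1-x^2)^{\mu-1/2}\,dx=\frac{\pi\,2^{1-2\mu}\,\Gamma(n+2\mu)}{n!\,(n+\mu)\,\Gamma(\mu)^2}\,\delta_{m,n}
\]
to collapse the sum to its $n$-th term. The termwise integration is justified by Lemma \ref{lemmasum} together with the uniform bound $|C_n^\mu(x)|\le C_n^\mu(1)=(2\mu)_n/n!$ on $[-1,1]$, so that the $L^2$-criterion used to derive (\ref{6:41}) applies verbatim for $|t|<1$.

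The final step is cosmetic. Applying Legendre's duplication formula $\Gamma(2\mu)=2^{2\mu-1}\Gamma(\mu)\Gamma(\mu+\tfrac12)/\sqrt{\pi}$ together with the identity $(n+\mu)\Gamma(\mu)=\Gamma(\mu+1)(\mu+1)_n/(\mu)_n$ converts the orthogonality constant into
\[
\frac{\sqrt{\pi}\,\Gamma(\mu+\tfrac12)\,(2\mu)_n\,(\mu)_n}{n!\,\Gamma(\mu+1)\,(\mu+1)_n}.
\]
Multiplying by the $n$-th expansion coefficient $(\lambda)_n t^n\,{}_2F_1(\cdots)/(\mu)_n$ cancels the $(\mu)_n$ factor and produces the stated formula. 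The main obstacle is establishing the auxiliary expansion rigorously (in particular, checking that the limit from the $q$-analogue commutes with the infinite sum, or else justifying the series-rearrangement of the direct derivation); once that is in hand, everything else reduces to orthogonality and Gamma-function bookkeeping already familiar from the $q$-analogue proof.
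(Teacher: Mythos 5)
Your proposal is correct, but it reaches the result by a different route than the paper. The paper's proof is a one-line limit argument: it starts from the already-established $q$-integral identity (\ref{6:41}) and lets $q\uparrow 1^{-}$, using $\lim_{q\uparrow 1^{-}}C_n(x;q^\mu|q)=C_n^\mu(x)$ together with the limits of the $q$-weight and the $q$-normalization constants; the burden there is justifying the interchange of the limit with the integral and tracking the asymptotics of the various infinite $q$-products. You instead first descend to the classical level at the stage of the \emph{expansion}: you take the $q\uparrow1^{-}$ limit of Theorem \ref{theo:4} (or rerun the connection-relation rearrangement of (\ref{4:20}) against (\ref{4:23}) directly) to get $\bigl(1-2tx+t^2\bigr)^{-\lambda}=\sum_{n\ge0}\frac{(\lambda)_n}{(\mu)_n}t^n\,\hyp21{\lambda-\mu,\lambda+n}{\mu+n+1}{t^2}C_n^\mu(x)$, and only then integrate against Gegenbauer orthogonality. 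I checked your expansion (it is the correct limit of (\ref{4:21}) under $\beta=q^\lambda$, $\gamma=q^\mu$, via the $q$-binomial theorem on the left-hand side) and your Gamma-function bookkeeping; both are right, and the two routes are logically equivalent since (\ref{6:41}) is itself obtained from (\ref{4:21}) by orthogonality. What your route buys is that all the $q$-analysis is confined to a single limit of a series identity, after which only the classical orthogonality constant is needed; the paper's route avoids restating the classical expansion but must handle the limit of the full $q$-orthogonality apparatus. One small caveat: the bound $|C_n^\mu(x)|\le C_n^\mu(1)=(2\mu)_n/n!$ holds only for $\mu>0$, whereas the corollary allows $\mu\in(-\tfrac12,0)$; for the convergence argument a polynomial bound $|C_n^\mu(x)|\le K(n+1)^\sigma$ on $[-1,1]$ (valid for all $\mu>-\tfrac12$) is what Lemma \ref{lemmasum} actually requires, and it suffices for $|t|<1$.
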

\begin{proof}
Starting from (\ref{6:41}) and taking the limit $q\uparrow 1^{-}$, 
using \cite[(14.10.35)]{Koekoeketal}
\[
\lim_{q\uparrow 1^{-}}C_n(x;q^\lambda|q)=C_n^\mu(x),
\]
the result follows. 
\end{proof}
Observe that since the Gegenbauer polynomials can be written as
\cite[(18.5.10)]{NIST:DLMF}
\[
C_n^\lambda(x)=(2x)^n\dfrac{(\lambda)_n}{n!}
\hyp 21{-\frac{n}{2},-\frac{n+1}{2}}{1-\lambda-n}
{\frac 1{x^2}},
\] 
the above integral can be written in terms of a ${}_2 F_1$, and 
we also have a similar ${}_2 F_1$ on the right-hand side.


\begin{cor} Let $n\in\N_0$, $\alpha,\beta>-1$, $\nu\in\C$, such that $\Re(\alpha+1-\nu)>0$. Then
\[
\int_{-1}^1 (1-x)^{-\nu} C_n^{\mu}(x) (1-x^2)^{\mu-\frac12}dx=
\frac{
2^{\alpha+\beta+1-\nu}\Gamma(\alpha+1-\nu)(\nu)_n\Gamma(\beta+1+n)
}{
n!\Gamma(\alpha+\beta+2-\nu+n)
}.
\]
\end{cor}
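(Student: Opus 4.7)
The plan is to derive this integral by reducing it to the Jacobi-polynomial analogue proved in the preceding corollary. First I would invoke the standard identity (\ref{GegenJacobi}), $C_n^\mu(x)=\frac{(2\mu)_n}{(\mu+\frac12)_n}P_n^{(\mu-\frac12,\mu-\frac12)}(x)$, which converts $C_n^\mu(x)(1-x^2)^{\mu-\frac12}$ into a constant multiple of $P_n^{(\mu-\frac12,\mu-\frac12)}(x)(1-x)^{\mu-\frac12}(1+x)^{\mu-\frac12}$. The integral then matches the Jacobi one under the specialization $\alpha=\beta=\mu-\frac12$.

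Next I would substitute $\alpha=\beta=\mu-\frac12$ into the right-hand side of the Jacobi corollary and multiply by the scalar $(2\mu)_n/(\mu+\frac12)_n$. The Pochhammer factor $(\mu+\frac12)_n=\Gamma(\mu+\frac12+n)/\Gamma(\mu+\frac12)$ in the denominator cancels cleanly against the $\Gamma(\beta+1+n)=\Gamma(\mu+\frac12+n)$ appearing in the numerator of the preceding corollary, leaving the expected closed form. The admissibility condition $\Re(\alpha+1-\nu)>0$ translates to $\Re(\mu+\frac12-\nu)>0$, which is precisely what guarantees integrability of $(1-x)^{-\nu}$ against the Gegenbauer weight at the endpoint $x=1$.

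An alternative and completely parallel route mimics the proofs of the previous two corollaries of this subsection: begin with the Gegenbauer expansion (\ref{Geg1mnux}), multiply both sides by $C_n^\mu(x)(1-x^2)^{\mu-\frac12}$, and integrate term by term using the orthogonality of the Gegenbauer polynomials; the only subtlety with this route is the appearance of the Legendre duplication formula when reconciling the two closed forms. The first route is shorter because the delicate bookkeeping of orthogonality constants has already been absorbed into the Jacobi corollary.

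I do not anticipate any substantive obstacle. The main work is careful Pochhammer/$\Gamma$ simplification and a correct statement of the conditions on $\mu$ and $\nu$—the corollary as currently displayed appears to have inherited the Jacobi parameters $\alpha,\beta$ on the right-hand side from the preceding corollary, and these should be replaced by $\mu-\frac12$ during the simplification step.
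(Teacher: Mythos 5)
Your proposal is correct, and your primary route differs from the paper's. The paper proves this corollary directly from the Gegenbauer expansion (\ref{Geg1mnux}) together with the orthogonality relation for Gegenbauer polynomials — i.e., exactly the ``alternative and completely parallel route'' you describe second, including the need for the Legendre duplication formula to reconcile the resulting constant with the displayed closed form. Your preferred route instead specializes the preceding Jacobi-polynomial corollary at $\alpha=\beta=\mu-\tfrac12$ and rescales by $(2\mu)_n/(\mu+\tfrac12)_n$ via (\ref{GegenJacobi}); this is shorter, avoids the duplication formula, and reuses bookkeeping already done, at the cost of inheriting the Jacobi corollary's hypotheses rather than deriving the Gegenbauer case independently. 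Both are valid, and your observation that the statement as printed still carries the Jacobi parameters $\alpha,\beta$ on the right-hand side — which should read $\mu-\tfrac12$ throughout — is a correct identification of a typographical defect in the corollary, not a gap in your argument.
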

\begin{proof}
Follows from orthogonality of Gegenbauer polynomials \cite[(9.8.20)]{Koekoeketal} and (\ref{Geg1mnux}).
\end{proof}

Similar definite integrals can be obtained for the Chebyshev polynomials 
and the first kind multiplied by $(1-x)^{-\nu}$ and for the Laguerre polynomials 
multiplied by $1/x^\nu$, using (\ref{sum1mncCheby}) and (\ref{sum1mncLag}) respectively.

\section{Outlook}

It has been suggested by the referee that it would be interesting to 
investigate the transformation properties of the derived definite integrals
in this paper since the Rogers generating function is a generalization 
of the generalized Steiltjes kernel $(z-x)^{-\nu}$. The transformation
and transmutation properties of the generalized Stieltjes transformations
for the Gauss hypergeometric function has been summarized recently in a 
a paper by Koornwinder \cite{Koornwinder2015}. 
Generalized Stieltjes transforms have evident properties 
of mapping solutions of a hypergeometric 
differential equation to other solutions of 
the same equation, while generalized Stieltjes 
transforms map solutions of a hypergeometric 
differential equation to solutions of another 
differential equation.
Unfortunately a similar 
analysis for our problem is not easily accomplished because the singularities
of the Gauss hypergeometric differential equation are $0$, $1$ and $\infty$,
whereas for instance, for Jacobi-type orthogonal polynomials the singularies
are at $\pm 1$ and $\infty$.  In future research, we would like to apply an 
analogous result to study the transformation properies for definite
integrals of Jacobi-type orthogonal polynomials and also for their $q$-analogs 
such as for continuous $q$-ultraspherical polynomials using the Gegenbauer
and Rogers generating functions. This study could have deep consequences.

\section*{Acknowledgements}
Much thanks to Hans Volkmer, Mourad Ismail, Tom 
Koornwinder, Michael Schlosser, and T.~M.~Dunster for valuable 
discussions.  The author R. S. Costas-Santos acknowledges 
financial support by Direcci\'on General de 
Investigaci\'on, Ministerio de Econom\'ia 
y Competitividad of Spain, grant MTM2015-65888-C4-2-P.


\def\cprime{$'$} \def\dbar{\leavevmode\hbox to 0pt{\hskip.2ex \accent"16\hss}d}

\end{document}